\documentclass[11pt]{article}
\setlength{\skip\footins}{0.6cm}

\usepackage[utf8]{inputenc}
\usepackage{lmodern}
\usepackage{subfiles}
\usepackage{enumitem}
	\setenumerate{ref=(\roman*), label={\normalfont(\roman*)}, itemsep=0em} 
        
\usepackage{amsfonts}
\usepackage{amsthm}
\usepackage{amsmath}
\usepackage{amssymb}
\usepackage{amscd}
\usepackage{mathrsfs}
\usepackage{mathtools}
\usepackage{bm}
\usepackage{esint}

\usepackage[margin=3cm]{geometry}
\usepackage{setspace}
\usepackage{indentfirst}
\usepackage{graphicx}
\usepackage{graphics}
\usepackage{lscape}
\usepackage{pgf,tikz}
\usepackage{tikz-cd}
\usepackage{color}
\usepackage{pict2e}
\usepackage{epic}
\usepackage{epstopdf}
\usepackage{titlesec, titlefoot}
	\titleformat{\section}[block]{\Large\bfseries\filcenter}{\thesection}{1em}{}
\usepackage{commath}
\usepackage{float}
\usepackage{caption}
\usepackage{etoolbox}
\usepackage[affil-it]{authblk}
\usepackage{combelow}

\usepackage[hidelinks, bookmarksdepth=3]{hyperref}
\hypersetup{bookmarksopen=true} 
\usepackage{hypcap}

\graphicspath{{./Pictures/}}
\allowdisplaybreaks


\usepackage{listings}
\lstset{
	language=Mathematica,
	basicstyle=\small\sffamily,
	numberstyle=\tiny,
	columns=fullflexible,
	showstringspaces=false
}

\usepackage{fancybox}
\makeatletter
\makeatother



\theoremstyle{plain}
\newtheorem{bigthm}{Theorem}

\newtheorem{bigcor}[bigthm]{Corollary}

\renewcommand*\thesection{\arabic{section}}
\numberwithin{equation}{section} 

\theoremstyle{plain}
\newtheorem{thm}{Theorem}
\newtheorem{lemma}[thm]{Lemma}
\newtheorem{prop}[thm]{Proposition}
\newtheorem{cor}[thm]{Corollary}
\numberwithin{thm}{section} 

\theoremstyle{definition}
\newtheorem{ndef}[thm]{Definition}
\newtheorem{ex}[thm]{Example}
\newtheorem{question}[thm]{Question}

\newtheorem{remark}[thm]{Remark}
\newtheorem{conj}[thm]{Conjecture}

\newcommand{\thistheoremname}{}
\newtheorem{genericthm}[thm]{\thistheoremname}
\newenvironment{para}[1]
  {\renewcommand{\thistheoremname}{#1}%
   \begin{genericthm}}
  {\end{genericthm}}

\newcommand{\thistheoremnames}{}
\newtheorem*{genericthms}{\thistheoremnames}
\newenvironment{para*}[1]
  {\renewcommand{\thistheoremnames}{#1}%
   \begin{genericthms}}
  {\end{genericthms}}

\expandafter\let\expandafter\oldproof\csname\string\proof\endcsname
\let\oldendproof\endproof
\renewenvironment{proof}[1][\proofname]{%
  \oldproof[\upshape \bfseries #1]%
}{\oldendproof}

\makeatletter
\def\@makechapterhead#1{%
  \vspace*{50\p@}%
  {\parindent \z@ \raggedright \normalfont
    \interlinepenalty\@M
    \Huge\bfseries  \thechapter.\quad #1\par\nobreak
    \vskip 40\p@
  }}
\makeatother

\usepackage{tocloft}
\cftsetindents{section}{0em}{2em}
\cftsetindents{subsection}{0em}{2em}
\setlength{\cftbeforesecskip}{3pt} 

\setcounter{tocdepth}{1} 


\def \a{\alpha}
\def \R {\mathbb{R}}
\def \Z {\mathbb{Z}}
\def \C{\mathbb{C}}

\def \N{\mathbb{N}}
\def \D{\textup{D}}
\def \J{\textup{J}}

\def \S{\mathbb{S}}
\def \e{\varepsilon}
\def \d{\,\textup{d}}

\def \exc{\backslash}
\def \p{\partial}

\def \mc{\mathcal}
\def \mb{\mathbb}

\def \supp{\textup{supp}\,}

\def \hs{\hspace{0.5cm}}

\def \tp{\textup}

\begin{document}

\title{\textbf{Energy minimisers with prescribed Jacobian}}

\author[1]{{\Large Andr\'e Guerra}}
\author[1]{{\Large Lukas Koch}}
\author[2]{{\Large Sauli Lindberg}}

\affil[1]{\small University of Oxford, Andrew Wiles Building Woodstock Rd, Oxford OX2 6GG, United Kingdom 
\protect \\
  {\tt{\{andre.guerra, lukas.koch\}@maths.ox.ac.uk}}
  \vspace{1em} \ }

\affil[2]{\small 
Aalto University, Department of Mathematics and Systems Analysis, P.O. Box 11100, FI-00076 Aalto, Finland 
\protect\\
  {\tt{sauli.lindberg@aalto.fi}} \ }

\date{}

\maketitle

\begin{abstract}
We study the symmetry and uniqueness of maps which minimise the $np$-Dirichlet energy, under the constraint that their Jacobian is a given radially symmetric function $f$. We find a condition on $f$ which ensures that the minimisers are symmetric and unique. In the absence of this condition we construct an explicit $f$ for which there are uncountably many distinct energy minimisers, none of which is symmetric. Even if we prescribe the maps to be the identity on the boundary of a ball we show that the minimisers need not be symmetric. This gives a negative answer to a question of H\'{e}lein (Ann.\ Inst.\ H.\ Poincaré Anal.\ Non Linéaire 11 (1994), no.\ 3, 275–296).
\end{abstract}

\unmarkedfntext{
\hspace{-0.8cm}
\emph{Formal acknowledgments.} A.G. and L.K. were supported by the EPSRC [EP/L015811/1]. S.L. was supported by the AtMath Collaboration at the University of Helsinki and the ERC grant 834728-QUAMAP.
}

\section{Introduction}

Given a bounded domain $\Omega\subset \R^n$, a typical problem in nonlinear elastostatics is to
\begin{equation}
\tp{minimise } \mathscr W[u]\equiv \int_\Omega W(x,\D u)\d x, \quad \tp{ among all } u\in u_0+W_0^{1,\infty}(\Omega,\R^n).
\label{eq:hyperelasticity}
\end{equation}
In this context, it is standard to assume that $u_0\colon \overline\Omega\to \overline O$ is an orientation-preserving diffeomorphism and that the stored-energy function $W\colon \overline \Omega\times \tp{GL}^+(n)\to \R$ is continuous \cite{Ball1977,Ball1981a}.

In order for the Direct Method to be applicable $\mathscr W$ needs to sequentially weakly lower semicontinuous and, more importantly for our purposes here, coercive \cite{Chen2017}. However, one is sometimes led to consider non-coercive energy functions and this is the case, for instance, when $W$ depends on $\D u$ only through $\J u\equiv \det\D u$, see e.g.\ \cite{Chipot1992, Fonseca1988, Fonseca1992} for examples in the study of elastic crystals and \cite{Dacorogna1981} for a different example. In this case, a solution of \eqref{eq:hyperelasticity} is found, at least formally, by solving
\begin{equation}
\label{eq:dirichlethyperelasticity}
\begin{cases}
\J u = f & \tp{in }\Omega,\\
u=u_0 & \tp{on } \p\Omega,
\end{cases}
\qquad
\tp{where }f \tp{ is such that } W(x,f(x))\equiv \min_{\xi>0} W(x,\xi).
\end{equation}
Although there is a good existence theory for \eqref{eq:dirichlethyperelasticity} whenever there is some $\a>0$ such that $u_0$ is a $C^{1,\a}$-diffeomorphism and $f\in C^{0,\a}$  \cite{Csato2012,Dacorogna1990, Moser1965, Ye1994}, little is known when $f$ is just an $L^p$ function. We refer the reader to  \cite{Fischer2019,Hytonen2018,Koumatos2015,Lindberg2017,Riviere1996}, as well as our
our recent works \cite{GuerraKochLindberg2020b, GuerraKochLindberg2020} for results in this direction;  counter-examples in the $p=\infty$ case were obtained in  \cite{Burago1998,McMullen1998}. 

In light of the above discussion, we are led to study the prescribed Jacobian equation
\begin{equation}
\label{eq:jac}
\J u = f\quad \tp{in } \Omega.
\end{equation}
Apart from the nonlinear character of the Jacobian, the main obstacle in studying existence and regularity of solutions of \eqref{eq:jac} is the \textit{underdetermined} nature of the equation. A natural way to circumvent this issue is to look for solutions which minimise an appropriate energy function: it may be that such solutions are better behaved and unique. Since we are interested in determining the optimal Sobolev regularity of solutions to \eqref{eq:jac}, the natural energy to consider is the $np$-Dirichlet energy:

\begin{ndef}\label{def:energy}
For each $f\in L^1_\tp{loc}(\Omega)$, we define the \textit{$np$-energy of $f$} as 
$$\mc E_{np}(f,\Omega)\equiv \inf\left\{\int_{\Omega}|\D v|^{np}\d x : v\in {\dot W}^{1,np}(\Omega,\R^n) \tp{ satisfies } \J v = f\tp{ a.e. in } \Omega\right\}.$$
Given $f\in L^1_\tp{loc}(\Omega)$, we say that $u\in {\dot
  W}^{1,np}(\Omega, \R^n)$ is a $np$-\textit{energy minimiser for $f$} if 
$$\int_{\Omega} |\D u|^{np}\d x = \mc E_{np}(f,\Omega) \qquad \tp{ and } \qquad \J u = f \tp{ a.e. in } \Omega.$$
\end{ndef}

We contrast this definition with that of the theory of Optimal Transport: there, a typical choice of functional to minimise is
\begin{equation*}
\label{eq:quadratic}
\int_{\Omega}\frac{|u(x)-x|^2}{2}\,f(x) \d x,
\end{equation*}
see \cite{Brenier1991, DePhilippis2014}.
While this energy does single out a unique solution of \eqref{eq:jac}, 
in general there are other solutions of \eqref{eq:jac} with better regularity, see \cite[page 293]{Ye1994} as well as \cite[page 2]{GuerraKochLindberg2020}. The same phenomenon was observed by \textsc{Bourgain} and \textsc{Brezis} in \cite{Bourgain2002} for the divergence equation: in general, underdetermined equations often admit solutions which have a surprising amount of regularity.

Establishing regularity of energy minimisers for \eqref{eq:jac} is a difficult task, even in the incompressible case $f=1$. There is an extensive literature on the topic, and we refer the reader to \cite{Bauman1992, Chaudhuri2009, Evans1999, Karakhanyan2012, Karakhanyan2014}, as well as the references therein, for further information.  

In this paper we are particularly concerned with the following question:
\begin{center}
is energy minimisation an effective selection criterion for \eqref{eq:jac}?
\end{center}

We focus on the case $n=2$. In general, whenever $f$ is bounded away from zero, solutions of \eqref{eq:jac} with $W^{1,n}$-regularity have integrable distortion. By the fundamental results of \textsc{Iwaniec} and \textsc{\v Sverák} \cite{Iwaniec1993a}, maps of integrable distortion between planar domains are \textit{open} and \textit{discrete}.  If $n>2$ then, in order for these properties to hold without further assumptions, the maps need to have $L^p$-integrable distortion for some $p>n-1$, see \cite{Hencl2014a,Iwaniec2001} and the references therein. Nevertheless most of the results in this paper permit straightforward extensions to the higher dimensional setting.

Besides the Dirichlet problem, as in \eqref{eq:dirichlethyperelasticity} and \cite{GuerraKochLindberg2020}, it is also natural to study \eqref{eq:jac} in the case $\Omega=\R^2$. These two situations differ in some important aspects and thus will be considered in separate subsections.

\subsection{Energy minimisers for the Dirichlet problem}

In this subsection we consider the Dirichlet problem for \eqref{eq:jac}: given a domain $\Omega\subset \R^2$,
\begin{equation}
\label{eq:jacdirichlet}
\begin{cases}
\J u = f & \tp{a.e.\ in }\Omega,\\
u=\tp{id} & \tp{on } \p\Omega.
\end{cases}
\end{equation}
We assume that $f$ is compatible with the boundary condition, as well as uniformly positive:
\begin{equation}
\label{eq:conditiondata}
\fint_\Omega f \d x = 1, \qquad \inf_\Omega f\geq c>0, 
\end{equation}
where $c$ is some fixed constant. 

Whenever $\Omega=B$ is a ball centred at the origin and $f$ is a radially symmetric function, i.e.\ $f=f(|z|)$, which satisfies \eqref{eq:conditiondata}, there is a unique radial stretching, denoted $\phi_1$, which solves \eqref{eq:jacdirichlet}:
$$\phi_1(z)\equiv \rho(|z|) \frac{z}{|z|}, \qquad \tp{where }
\rho(r) \equiv \sqrt{\int_0^r 2 s f(s) \d s}.$$
Here the subscript `1' denotes the topological degree of the map, see  Definition \ref{def:radialstretching} for more general solutions. We are thus led to the following very natural problem, c.f.\ \cite[Question 9]{Ye1994}:

\begin{question}[Hélein]\label{question:Helein}
Let $f$ be radially symmetric. Is the unique radial stretching $\phi_1$ solving \eqref{eq:jac} a Dirichlet energy minimiser\footnote{In higher dimensions, one should consider the $n$-harmonic energy, as done in Definition \ref{def:energy}. We believe the original question in \cite{Ye1994} has a misprint.} for $f$ in the class $\tp{id}+W^{1,2}_0(B,B)$?\end{question}

In Theorem \ref{bigthm:helein}, we give a negative answer to Question \ref{question:Helein} and identify a class of data for which uniqueness holds. As in \cite{GuerraKochLindberg2020}, we introduce the complete metric spaces 
$$X_p(B)\equiv \begin{cases}
\{f\in L^p(B):\eqref{eq:conditiondata} \tp{ holds}\} & \tp{if }p>1,\\
\{f\in L\log L(B):\eqref{eq:conditiondata} \tp{ holds}\} & \tp{if }p=1.
\end{cases}$$
\begin{bigthm}\label{bigthm:helein}
Energy minimisers are symmetric for some data but not for other. Precisely:
\begin{enumerate}
\item\label{it:averagecond} Take $1\leq p<\infty$ and let $f\in X_p(B)$ be radially symmetric. If, for a.e.\ $r\in (0,R)$,
\begin{equation}
\label{eq:averagecondition}
f(r)\leq  \fint_{B_r(0)} f  \d x,
\end{equation}
then the unique radial stretching $\phi_1$ solving \eqref{eq:jac} is a $2p$-energy minimiser for $f$ and
\begin{equation}
\Vert \D \phi_1\Vert_{L^{2p}(B)}^2 \lesssim \Vert f\Vert_{X_p(B)},
\label{eq:estimateoverdomains}
\end{equation}
Moreover, $\phi_1$ is the unique $2p$-energy minimiser for $f$ in the class $\tp{id}+W^{1,2p}_0(B,B)$.

\item\label{it:counterex} Fix $p_0\in [1,\infty)$. There is $f\in L^\infty(B)$ satisfying \eqref{eq:conditiondata} and such that, for any $p\in [1,p_0]$, $\phi_1$ is not a $2p$-energy minimiser for $f$ in the class $\tp{id}+W^{1,2p}_0(B,B)$.
\end{enumerate}
\end{bigthm}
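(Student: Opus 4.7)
For part \eqref{it:averagecond}, I work in polar coordinates $(r,\theta)$, using $|\D u|^2 = |\p_r u|^2 + r^{-2}|\p_\theta u|^2$ and $\J u = r^{-1}\tp{Im}(\overline{\p_r u}\,\p_\theta u)$. The key reduction is the slice-wise bound
\[
\int_0^{2\pi}|\D u|^{2p}\,d\theta \;\geq\; 2\pi\bigl(\rho'(r)^2 + \rho(r)^2/r^2\bigr)^p = \int_0^{2\pi}|\D\phi_1|^{2p}\,d\theta \qquad\text{for a.e.\ }r\in(0,R),
\]
which integrated against $r\,dr$ yields $\int_B|\D u|^{2p}\geq\int_B|\D\phi_1|^{2p}$.

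Set $\alpha(r)=\int_0^{2\pi}|\p_r u|^2\,d\theta$ and $\beta(r)=r^{-2}\int_0^{2\pi}|\p_\theta u|^2\,d\theta$. Three lower bounds drive the slice estimate. First, by Stokes' theorem the closed curve $\theta\mapsto u(re^{i\theta})$ encloses algebraic area $\int_{B_r}\J u\,dx = \pi\rho(r)^2$, so the isoperimetric inequality forces its length to be at least $2\pi\rho(r)$, and Cauchy--Schwarz then gives $\beta(r)\geq 2\pi\rho(r)^2/r^2$. Second, $\J u\leq|\p_r u|\,|\p_\theta u|/r$ combined with Cauchy--Schwarz yields $\alpha(r)\beta(r)\geq(2\pi f(r))^2$. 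Third, Jensen's inequality produces $\int_0^{2\pi}|\D u|^{2p}\,d\theta\geq 2\pi\bigl((\alpha+\beta)/(2\pi)\bigr)^p$. Condition \eqref{eq:averagecondition} is equivalent to $\rho'\leq\rho/r$, hence to $\rho^2/r^2\geq f$; it ensures that $\beta$'s lower bound $2\pi\rho^2/r^2$ exceeds the unconstrained AM--GM optimum $2\pi f$ of $\alpha+\beta$ subject to $\alpha\beta\geq(2\pi f)^2$. Minimising $\alpha+\beta$ over the constrained region then produces $\alpha+\beta\geq 2\pi(\rho'^2+\rho^2/r^2)$ exactly, and Jensen delivers the slice bound. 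This is the step where \eqref{eq:averagecondition} is indispensable.

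Uniqueness follows by tracking equality cases: equality in the isoperimetric/Cauchy--Schwarz chain forces $u|_{\p B_r}$ to parametrise a round circle of radius $\rho(r)$ at constant speed; equality in the radial--tangential Cauchy--Schwarz forces $\p_r u\perp\p_\theta u$ with positive orientation; and equality in Jensen forces $|\D u|^2$ to be constant in $\theta$. Combined with $u=\tp{id}$ on $\p B$, these constraints force $u=\phi_1$. The estimate \eqref{eq:estimateoverdomains} reduces to bounding $\int_0^R(\rho/r)^{2p}r\,dr$ (which under \eqref{eq:averagecondition} also controls $\int_0^R\rho'^{2p}r\,dr$) in terms of $\|f\|_{X_p(B)}$; this is a standard weighted Hardy inequality applied to $\rho^2(r)=2\int_0^r sf\,ds$, and for $p=1$ it is precisely where the $L\log L$ norm enters.

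For \eqref{it:counterex}, I choose a radial $f\in L^\infty(B)$ that violates \eqref{eq:averagecondition} strongly---for instance, with $f$ concentrated in an annulus away from the origin, so that $\rho'\gg\rho/r$ there and $\phi_1$ is highly non-conformal. The competitor is built by replacing $\phi_1$ on this annulus with a map whose singular values are more balanced, for example via composition with a carefully chosen area-preserving non-radial transformation of the annulus that fixes its boundary, or via a map of the form $u(re^{i\theta})=\psi(r)e^{i(\theta+\alpha(r,\theta))}$ with $\alpha$ and $\psi$ tuned so that $\J u = f$. The main obstacle is arranging the construction so that the energy gain over $\phi_1$ persists uniformly for every $p\in[1,p_0]$; this requires controlling how the Jacobian constraint interacts with the singular-value redistribution across the whole annulus.
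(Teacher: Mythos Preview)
Your argument for part~\ref{it:averagecond} is essentially the paper's, reorganised. The paper packages the pointwise inequality $|\D u|^2\geq |\D u\cdot\nu^\bot|^2+f^2/|\D u\cdot\nu^\bot|^2$ into the convex function $\psi(a,b)=a+b^2/a$ and applies Jensen to $\psi$, whereas you integrate radial and tangential parts separately into $\alpha,\beta$ and then optimise $\alpha+\beta$ under $\alpha\beta\geq(2\pi f)^2$, $\beta\geq 2\pi\rho^2/r^2$. Both routes rest on the same three ingredients (isoperimetric bound on the tangential part, Cauchy--Schwarz for the Jacobian, monotonicity of $t+c/t$ for $t\geq\sqrt c$), and your optimisation is correct. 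Two points deserve more care than you give them: the isoperimetric step for a Sobolev map requires the generalised isoperimetric inequality with winding numbers together with the change-of-variables formula $\int_{B_r}\J u=\int\tp{deg}(y,u,B_r)\,dy$ (the paper isolates this as a separate proposition, using that $f\geq c>0$ forces $u$ to have integrable distortion); and for~\eqref{eq:estimateoverdomains} at $p=1$ the ``standard weighted Hardy inequality'' you invoke is precisely Stein's equivalence $\Vert f\Vert_{L\log L(B)}\approx\Vert Mf\Vert_{L^1(B)}$ applied after extending $f$ by zero.

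Part~\ref{it:counterex}, however, is not a proof. You correctly identify that one should take $f$ concentrated on an annulus away from the origin so that $\dot\rho\gg\rho/r$, and you correctly identify the obstacle (making the gain uniform in $p\in[1,p_0]$), but you have not built the competitor. The paper's mechanism is different from either of the shapes you propose: it does not perturb $\phi_1$, nor does it use a map of the form $\psi(r)e^{i(\theta+\alpha(r,\theta))}$. Instead it takes the one-parameter family $f_\e=\e\,1_{B_1}+1_{\mb A(1,2)}+\tfrac{6-\e}{5}1_{\mb A(2,3)}$ and constructs, for each $\e$, an explicit Lipschitz solution $u_\e$ of the Dirichlet problem with $\Vert\D u_\e\Vert_\infty\leq C$ \emph{independent of $\e$}. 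Since $\Vert\D\phi_{1,\e}\Vert_{L^2(B)}\to\infty$ as $\e\to0$ (from $\dot\rho_\e^2=r^2/(r^2-1+\e)$ on $(1,2)$), choosing $\e$ small enough beats $\phi_1$ in every $L^{2p}$ norm simultaneously. The construction of $u_\e$ is the substance of the argument: it passes through a bi-Lipschitz volume-preserving identification of balls with squares, an explicit piecewise-affine map on the square analogue of $B_2$, and a Fonseca--Parry/Dacorogna--Moser step to handle the outer annulus with uniform control of the Lipschitz constant. None of this is captured by ``a carefully chosen area-preserving non-radial transformation of the annulus that fixes its boundary''.
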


The local $L\log L$ integrability of non-negative Jacobians of $W^{1,n}$ maps was proved by \textsc{M\"uller} in~\cite{Muller1990}, and a global version, under suitable boundary regularity, was proved in \cite{Hogan2000}, while here \eqref{eq:estimateoverdomains} is the reverse inequality.

Condition \eqref{eq:averagecondition} is closely related to the regularity and symmetry properties of minimisers. Concerning the regularity, note that estimate \eqref{eq:estimateoverdomains} is in general false if $f$ does not satisfy \eqref{eq:averagecondition}: in fact, we showed in \cite{GuerraKochLindberg2020} that, for a Baire-generic $f$ in $X_p(B)$, solutions of \eqref{eq:jacdirichlet} are at best in $W^{1,p}(B)$, and not in $W^{1,2p}(B)$, as in \eqref{eq:estimateoverdomains}. Concerning the symmetry, part \ref{it:counterex} of Theorem \ref{bigthm:helein} shows that, in the absence of \eqref{eq:averagecondition}, energy minimisers are in general not symmetric. We conjecture that energy minimisers for the Dirichlet problem are also non-unique in general, compare with Theorem \ref{bigthm:manyminimisers} below. 

Let us now explain the role played by condition \eqref{eq:averagecondition} in studying energy minimisation. 
Writing $z=r e^{i \theta}$, note that
$$
|\D u|^2 = |\p_r u|^2 + \frac{|\p_\theta u|^2}{r^2} ,
\qquad 
\J u=  \frac{\p_\theta u}{r} \wedge \p_r u .
$$
Hence we see that energy minimisation favours maps for which:
\begin{enumerate}
\item\label{it:alignment}  $\p_r u$ is approximately perpendicular to $\p_\theta u$, so that $|\J u|\approx |\p_r u||\p_\theta u|/r$;
\item\label{it:equidistribenergy} $|\p_r u|\approx \frac 1 r |\p_\theta u|$, so that $|\p_r u |\frac{|\p_\theta u|}{r}\approx \frac 1 2(|\p_r u |^2+|\p_\theta u |^2/r^2)=\frac 1 2|\D u|^2$.
\end{enumerate}
Radial stretchings accomplish \ref{it:alignment} perfectly: indeed,
$$
\D\phi_1(z) = \frac{\rho(r)}{r} \tp{Id} + \bigg( \dot \rho(r)-\frac{\rho(r)}{r}\bigg)\frac{z\otimes z}{r^2}\quad \implies \quad 
\begin{cases}
\p_r \phi_1  = \dot\rho(r) z, \\ 
\frac 1 r \p_\theta \phi_1  = \frac{\rho(r)}{r} z^\bot.
\end{cases}
$$
There is, however, no reason for radial stretchings to satisfy \ref{it:equidistribenergy}, and this is where condition \eqref{eq:averagecondition} comes in: it is easy to see that this condition is equivalent to
\begin{equation}
\label{eq:radialaveragecond}
|\p_r \phi_1| \leq \frac 1 r |\p_\theta \phi_1|.
\end{equation}
The isoperimetric inequality shows that $\phi_1$ has optimal angular derivatives among all solutions of \eqref{eq:jac}, while \eqref{eq:radialaveragecond} ensures that these derivatives control the Dirichlet energy $|\D \phi_1|^2$, which enables us to prove that $\phi_1$ is an energy minimiser.

A simple sufficient criterion for \eqref{eq:radialaveragecond} to hold is that $r\mapsto f(r)$ is non-increasing. We also note that condition \eqref{eq:radialaveragecond} is not new: a radial stretching satisfying \eqref{eq:radialaveragecond} was called \textit{conformally non-expanding} in \cite{Iwaniec2012}, as it does not increase the conformal modulus of annuli.

\subsection{Energy minimisers in \texorpdfstring{$\R^n$}{}}

In this subsection we study \eqref{eq:jac} over $\R^n$. In this setting, the following question, essentially set by \textsc{Coifman}, \textsc{Lions}, \textsc{Meyer} and \textsc{Semmes} in \cite{Coifman1993}, remains an outstanding open problem:

\begin{question}\label{question:CLMS}
Is the Jacobian $\J \colon {\dot W}^{1,np}(\R^n,\R^n)\to \mathscr H^p(\R^n)$ surjective?
\end{question}

Here $\mathscr H^p(\R^n)$ stands for the real Hardy space and we refer the reader to \cite{Coifman1977, Stein2016} for its theory. We recall that, for $p>1$, $\mathscr H^p(\R^n)$ agrees with the usual Lebesgue space $L^p(\R^n)$. 

Question \ref{question:CLMS} is especially natural as, in \cite{Coifman1993,Hytonen2018}, the authors prove that $\mathscr H^p(\R^n)$ is the smallest Banach space containing the range of the Jacobian, compare with \cite{Lindberg2017} for the \textit{inhomogeneous} case. In \cite{Iwaniec1997}, see also \cite{Bonami2007}, \textsc{Iwaniec} went further than Question \ref{question:CLMS} by conjecturing:

\begin{conj}\label{conj:iwaniec}
For each $p\in [1,\infty)$, the Jacobian has a \textit{continuous right inverse}: there is a continuous map
$E\colon \mathscr{H}^p(\R^n)\to \dot{W}^{1,np}(\R^n,\R^n)$ such that $\J \circ E = \tp{Id}$.
\end{conj}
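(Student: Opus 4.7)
This is Iwaniec's celebrated conjecture and, to the best of my knowledge, it remains open in full generality, so any sketch I write is necessarily speculative; I will describe the two most natural lines of attack and indicate where each seems to break down.

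The first plan is an iterative construction. I would fix $p>1$ first (the case $p=1$ requires atomic reductions and is known to be substantially harder). Given $f\in L^p(\R^n)$, I would start from a linear right inverse of the divergence, e.g.\ a Bogovskii- or Riesz-type singular integral operator, to produce $u_0=\tp{id}+v_0$ with $\tp{div}\,v_0=f-1$, so that $\J u_0 = f + O(|\D v_0|^2)$. I would then correct iteratively by writing $u_{k+1}=u_k+w_k$ and exploiting the linearisation $\J u_{k+1}-\J u_k\approx \tp{div}(\tp{cof}(\D u_k)\,w_k)$, solving this divergence equation to cancel the current error $f-\J u_k$. The hope is that the iteration is contractive in $\dot W^{1,np}$ with continuous dependence on $f$.

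The central obstacle is that the linearised operator $\tp{div}(\tp{cof}(\D u_k)\,\cdot\,)$ degenerates wherever $\J u_k$ vanishes, while for $p=1$ the data $f\in\mathscr H^1(\R^n)$ integrate to zero and generically change sign, making such degeneracies essentially unavoidable. Even granted convergence for each individual $f$, continuity in the Hardy norm requires exploiting atomic cancellation, which the nonlinearity of $\J$ destroys; this is precisely why even the a priori weaker Question \ref{question:CLMS}, on surjectivity alone, is itself open.

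A second, more variational, plan would be to define $E(f)$ as a canonical $np$-energy minimiser in the sense of Definition \ref{def:energy}. For radially symmetric data satisfying \eqref{eq:averagecondition}, Theorem \ref{bigthm:helein}\ref{it:averagecond} already delivers such a canonical choice together with the sharp bound \eqref{eq:estimateoverdomains}, in pleasing agreement with the scaling predicted by Conjecture \ref{conj:iwaniec}. Unfortunately Theorem \ref{bigthm:helein}\ref{it:counterex} produces data for which minimisers are non-unique and non-symmetric, so no obvious canonical selection is available in the non-symmetric regime. In my view, reconciling the nonlinearity and non-uniqueness of $\J$ with the linear cancellation structure of $\mathscr H^p$ — especially at the endpoint $p=1$ — is the essential difficulty that keeps the conjecture open.
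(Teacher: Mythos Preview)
You have correctly identified that this statement is a \emph{conjecture}, not a theorem: the paper does not prove it and explicitly presents it as open. There is therefore no ``paper's own proof'' to compare against, and your decision to explain why the problem is hard rather than to fabricate an argument is the right one.

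Your second plan is precisely the Strategy~\ref{strat:iwaniec} that the paper attributes to Iwaniec, and the paper's main contribution is to show that step~\ref{it:uniquenessminimizer} of that strategy fails: Theorem~\ref{bigthm:manyminimisers} constructs a radially symmetric $f\in\mathscr H^p(\R^2)$ with uncountably many $2p$-energy minimisers modulo rotations. A small imprecision in your write-up: Theorem~\ref{bigthm:helein}\ref{it:counterex}, which you cite, shows that the radial stretching is not a minimiser for the Dirichlet problem; the actual non-uniqueness result in the full-space setting is Theorem~\ref{bigthm:manyminimisers}. You should cite the latter when arguing that energy minimisation fails as a selection mechanism. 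The paper also remarks that, even assuming surjectivity, a nonlinear Bartle--Graves theorem would still be needed, and that such results do not extend to multilinear maps without extra hypotheses; this is a further obstruction worth mentioning.
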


In \cite{Iwaniec1997}, \textsc{Iwaniec} proposed the following route towards Conjecture \ref{conj:iwaniec}:
\begin{para}{Strategy}\label{strat:iwaniec}
A possible way of proving Conjecture \ref{conj:iwaniec} is to establish the following claims:
\begin{enumerate}

\item\label{it:estimateminimizer} Every $np$-energy minimiser satisfies $\Vert \D u \Vert^n_{L^{np}(\R^n)} \lesssim \Vert \tp{J} u \Vert_{\mathscr{H}^p(\R^n)}$.

\item\label{it:uniquenessminimizer} For all $f \in \mathscr{H}^p(\R^n)$ there is a unique $np$-energy minimiser $u_f$ for $f$, modulo rotations.

\item\label{it:continuousdependence} For all $f\in \mathscr{H}^p(\R^n)$ there is a rotation $Q_f\in \tp{SO}(n)$ such that $f\mapsto Q_f u_f$ is continuous.
\end{enumerate}
\end{para}

The nonlinear open mapping principles that we proved in \cite{GuerraKochLindberg2020b} show that \ref{it:estimateminimizer} is \textit{equivalent} to a positive answer to Question \ref{question:CLMS}. In this direction, \textsc{Iwaniec} suggested that one should prove \ref{it:estimateminimizer} by constructing a Lagrange multiplier for every $np$-energy minimiser, see the third author's works \cite{Lindberg2020,Lindberg2015} for  results in this direction. 

Using the terminology of \cite{GuerraKochLindberg2020b}, we say that a solution in ${\dot W}^{1,n}(\R^n,\R^n)$ of \eqref{eq:jac} is \textit{admissible} if it is continuous and it satisfies the change of variables formula. We note that solutions in ${\dot W}^{1,np}(\R^n,\R^n)$, for $p>1$, are always admissible, see Remark \ref{rem:changeofvars}. The proof of Theorem \ref{bigthm:helein} is easily adapted to $\R^2$ and provides conditions on $f$ under which \ref{it:estimateminimizer} and \ref{it:uniquenessminimizer} hold:

\begin{bigcor}\label{bigcor:uniqueminimisers}
Let $f\in L^p(\R^2)$ be a radially symmetric function such that
$$ |f(r)|\leq \fint_{B_r(0)} f \d x\qquad \text {for a.e. } r\in (0,\infty).$$ 
Then $\Vert \D \phi_1 \Vert^2_{L^{2p}(\R^2)} \lesssim \Vert f \Vert_{\mathscr{H}^p(\R^2)}$ and, for $p>1$, $\phi_1$ is the {\normalfont unique} $2p$-energy minimiser for $f$, modulo rotations. For $p=1$ the same statement holds in the class of admissible solutions.
\end{bigcor}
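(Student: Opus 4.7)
The plan is to adapt the proof of Theorem \ref{bigthm:helein}\ref{it:averagecond} from the ball to $\R^2$. The main new ingredient is a Hardy space maximal function estimate which replaces the $L\log L$ bound used on balls; the minimality and uniqueness arguments require only cosmetic changes, with decay at infinity taking the role of the Dirichlet boundary condition.

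For the energy estimate I would first unpack the hypothesis. The definition of $\rho$ gives $(\rho(r)/r)^2 = \fint_{B_r(0)} f\d x$ and $\dot\rho(r)\rho(r) = rf(r)$, so the hypothesis $|f(r)| \leq \fint_{B_r(0)} f\d x$ is equivalent to $|\dot\rho|\leq \rho/r$. This yields the pointwise bound $|\D\phi_1|^2 = \dot\rho^2 + \rho^2/r^2 \leq 2(\rho/r)^2 \leq 2 Mf(z)$ a.e., where $M$ denotes the centred Hardy--Littlewood maximal operator. The estimate $\|\D\phi_1\|_{L^{2p}(\R^2)}^2 \lesssim \|f\|_{\mathscr H^p(\R^2)}$ then follows from the $L^p$-boundedness of $M$ for $p>1$ and from the Fefferman--Stein maximal characterisation $\|Mf\|_{L^1(\R^2)} \lesssim \|f\|_{\mathscr H^1(\R^2)}$ for $p=1$.

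For minimality I would run the proof of Theorem \ref{bigthm:helein}\ref{it:averagecond} essentially verbatim. The key point is that, for any admissible competitor $v$, the change of variables formula gives $\int_{B_r} \J v\d x = \pi\rho(r)^2$ for every $r>0$, so the isoperimetric inequality applied to the curve $v(\p B_r)$ combined with Jensen's inequality produces the angular bound $\int_0^{2\pi}|\p_\theta v(r,\theta)|^{2p}\d\theta \geq 2\pi\rho(r)^{2p}$. Combining this with the identity $|\D v|^2 = |\p_r v|^2 + |\p_\theta v|^2/r^2$, with the hypothesis-driven pointwise bound $|\D\phi_1|^{2p} \leq 2^p(\rho/r)^{2p}$, and with the treatment of the radial contribution as in Theorem \ref{bigthm:helein}\ref{it:averagecond}, integration in $r$ yields $\int_{\R^2}|\D v|^{2p}\d x \geq \int_{\R^2}|\D\phi_1|^{2p}\d x$. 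Admissibility is automatic for $p>1$ by Remark \ref{rem:changeofvars}, while for $p=1$ it is part of the hypothesis.

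For uniqueness modulo rotations I would track the equality cases: equality in the isoperimetric step forces each $v(\p B_r)$ to be a simple round circle of radius $\rho(r)$ traversed at constant angular speed, while Sobolev decay of $v$ at infinity together with admissibility pins the common centre at the origin; the only remaining freedom is a single rotation of the image, yielding $v = R\phi_1$ for some $R\in \tp{SO}(2)$. The main obstacle I anticipate is ensuring that the circle-by-circle arguments are valid on a full-measure set of radii at the low regularity $\dot W^{1,2p}$, which is handled by Fubini together with (for $p=1$) the admissibility hypothesis.
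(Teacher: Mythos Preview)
Your outline for $p > 1$ matches the paper's argument (Proposition \ref{prop:energyquasiminim} together with the equality case of Proposition \ref{prop:isoperimetric}), and the minimality and uniqueness sketches are at the same level of detail as the paper, which simply says the corollary follows by inspecting the proof of Theorem \ref{bigthm:helein}\ref{it:averagecond}.

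The genuine gap is in the energy estimate when $p = 1$. The inequality you invoke, $\|Mf\|_{L^1(\R^2)} \lesssim \|f\|_{\mathscr{H}^1(\R^2)}$ with $M$ the Hardy--Littlewood maximal operator, is simply false: since $Mf = M|f|$, this maximal function ignores cancellation, and for any nonzero $g \in L^1(\R^2)$ one has $Mg(x) \gtrsim |x|^{-2}$ for large $|x|$, so $Mg \notin L^1(\R^2)$. The Fefferman--Stein characterisation of $\mathscr{H}^1$ uses maximal functions built from \emph{smooth} approximate identities; the indicator kernel does not qualify. (A related minor slip: the ball $B_{|z|}(0)$ is not centred at $z$, so even the pointwise bound $(\rho/r)^2 \leq Mf(z)$ already needs the uncentred maximal function.)

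The paper's route for $p=1$ bypasses the maximal function entirely. One uses the equivalence $\|f(|\cdot|)\|_{\mathscr{H}^1(\R^2,\d x)} \approx \|f\|_{\mathscr{H}^1(\R,|r|\,\d r)}$ from \cite{Coifman1977} and then the atomic decomposition in the weighted Hardy space: for each atom $a$, supported in $[r_1,r_2]$ with $\int a(r)|r|\,\d r = 0$, the quantity $\int_\e^{1/\e} r^{-1}\int_{-r}^r a(s)|s|\,\d s\,\d r$ is bounded by an absolute constant uniformly in $\e$, the mean-zero condition killing the contribution for $r$ beyond $\max(|r_1|,|r_2|)$. Summing over atoms yields the estimate. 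Thus the $\mathscr{H}^1$ cancellation enters through the vanishing moment of atoms, not through any maximal inequality; this is the missing idea in your $p=1$ step.
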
 

Further uniqueness results can be found in \cite{Lindberg2020}.
Nonetheless, and despite these positive results, this paper's second main contribution is to show that, in general, claim \ref{it:uniquenessminimizer} is false:
\begin{bigthm}\label{bigthm:manyminimisers}
Fix $1\leq p<\infty$. There is a radially symmetric function $f\in \mathscr H^p(\R^2)$ which has uncountably many $2p$-energy minimisers, modulo rotations.
\end{bigthm}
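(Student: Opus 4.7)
The plan is to lift the Dirichlet-problem non-symmetry phenomenon from Theorem~\ref{bigthm:helein}\ref{it:counterex} to the whole space $\R^2$ and then ``stack'' independent copies at disjoint scales to obtain $2^{\aleph_0}$ minimisers modulo rotations.

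First (compactly supported bubble). Using Theorem~\ref{bigthm:helein}\ref{it:counterex}, I would fix a radially symmetric $g_0\in L^\infty(B)$ and a non-symmetric $2p$-energy minimiser $v$ for $g_0$ on $B$ with $v=\tp{id}$ on $\p B$. From $v$ I extract a compactly supported map $\Phi\in \dot W^{1,2p}(\R^2,\R^2)$ supported in an annulus $A=\{1/2<|z|<2\}$, with radial, zero-mean Jacobian $g$, which is itself a $2p$-energy minimiser for $g$ in the class of admissible maps. One can arrange, perhaps at the cost of slightly modifying $g_0$, that the reflection $\Phi'(z)\equiv \overline{\Phi(\bar z)}$ is also a minimiser for $g$ yet not conjugate to $\Phi$ under any rotation.

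Next (stacking at many scales). I would choose $\lambda_k\to 0$ decaying fast enough that the annuli $A_k\equiv \lambda_k A$ are pairwise disjoint and $\sum_k \lambda_k^{2/p}<\infty$. For each binary sequence $\underline \epsilon=(\epsilon_k)\in\{0,1\}^\N$, set
\begin{equation*}
u_{\underline\epsilon}(z)\equiv \sum_{k=1}^\infty \lambda_k \Phi^{\epsilon_k}(z/\lambda_k), \qquad \Phi^0\equiv\Phi,\ \Phi^1\equiv\Phi'.
\end{equation*}
The disjointness of supports gives $\J u_{\underline\epsilon}(z)=\sum_k g(z/\lambda_k)\equiv f(z)$, which is radially symmetric, and the convergence conditions ensure $f\in \mathscr H^p(\R^2)$ and $u_{\underline\epsilon}\in \dot W^{1,2p}(\R^2,\R^2)$. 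The $2p$-energy splits additively over scales, each summand equalling $\lambda_k^2\int|\D\Phi|^{2p}\d x$.

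Finally (minimality and counting orbits). Given any competitor $\tilde u$ with $\J\tilde u=f$, I would restrict $\tilde u$ to each $A_k$; since $f=g(\cdot/\lambda_k)$ is supported in $A_k$ and vanishes near $\p A_k$, a localisation argument reduces the comparison on $A_k$ to the bubble minimisation at scale $\lambda_k$, yielding $\int_{A_k}|\D\tilde u|^{2p}\d x\geq \lambda_k^2\int|\D\Phi|^{2p}\d x$, and summing shows $u_{\underline\epsilon}$ is a global minimiser. Two minimisers $u_{\underline\epsilon}$, $u_{\underline\epsilon'}$ lie in the same rotation orbit only if a single $Q\in SO(2)$ sends $\Phi^{\epsilon_k}$ to $\Phi^{\epsilon'_k}$ at every scale simultaneously; since $\Phi$ and $\Phi'$ are not rotationally equivalent, this forces $\underline\epsilon=\underline\epsilon'$, and hence $\underline\epsilon\mapsto u_{\underline\epsilon}$ yields $2^{\aleph_0}$ distinct orbits. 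The hardest part will be this localisation step — showing that the global minimisation decouples cleanly across the scales $\lambda_k$, i.e.\ that a competitor cannot gain an energetic advantage by letting mass ``flow'' between annuli; this presumably exploits the vanishing of $f$ near each $\p A_k$ together with admissibility and change-of-variables properties of $\dot W^{1,2p}$ maps. Producing the bubble $\Phi$ with a reflection partner $\Phi'$ in a genuinely distinct orbit in Step~1 is the other main technical hurdle.
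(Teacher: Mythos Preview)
Your proposal has two genuine gaps, both of which you flag yourself but neither of which is easily resolved.

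\textbf{The localisation step.} This is the fatal one. You want to show that for any competitor $\tilde u\in\dot W^{1,2p}(\R^2,\R^2)$ with $\J\tilde u=f$, the energy satisfies $\int_{\R^2}|\D\tilde u|^{2p}\geq\sum_k\lambda_k^2\int|\D\Phi|^{2p}$. But $\tilde u$ need not vanish, nor even be bounded, on the regions between the annuli $A_k$; it is only constrained by $\J\tilde u=0$ there. Restricting $\tilde u$ to $A_k$ gives a map with \emph{uncontrolled} boundary values on $\p A_k$, so it is not a competitor for any bubble problem you have set up. The vanishing of $f$ near $\p A_k$ does not force $\tilde u$ to be constant there, and the change-of-variables formula only controls integrals of the Jacobian, not the Dirichlet energy. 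There is no general subadditivity principle $\mc E_{2p}(\sum g_k)\geq\sum\mc E_{2p}(g_k)$ for disjointly supported $g_k$, and I do not see how to prove it here.

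\textbf{The bubble construction.} Theorem~\ref{bigthm:helein}\ref{it:counterex} gives a non-symmetric minimiser for the \emph{Dirichlet} problem on a ball with $v=\tp{id}$ on $\p B$. Extracting from this a compactly supported $\Phi\in\dot W^{1,2p}(\R^2,\R^2)$ that is a \emph{global} minimiser for its own Jacobian on $\R^2$ is a different problem; the boundary condition $v=\tp{id}$ is incompatible with compact support, and passing to $\R^2$ removes the Dirichlet constraint that made $v$ a minimiser in the first place.

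\textbf{The paper's approach is entirely different and avoids both issues.} It takes a single radially symmetric $f$ (chosen so that no generalised radial stretching $\phi_k$ lies in $W^{1,2}$) and a single minimiser $u$, then exploits the rotation action $u_\a(z)\equiv u(e^{i\a}z)$: each $u_\a$ is again a minimiser because $f$ is radial. The equivalence classes $X_{\a_0}=\{\a:u_\a=u_{\a_0}\text{ modulo rotations}\}$ are closed and partition $[0,2\pi]$. If there were only one class, one shows $u$ maps circles to circles and is therefore a $\phi_k$, contradicting the choice of $f$. Sierpi\'nski's theorem (a connected compact Hausdorff space cannot be a nontrivial countable disjoint union of closed sets) then upgrades ``more than one class'' to ``uncountably many''. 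No stacking, no localisation, no bubble construction.
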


Theorem \ref{bigthm:manyminimisers} shows that energy minimisation is not a suitable selection criterion. It is also very difficult to work with energy minimisers directly: when $p=1$, we cannot decide whether they are admissible, although in \cite{GuerraKochLindberg2020b} we showed that, under natural assumptions, the existence of energy minimisers implies the existence of admissible solutions. 

To conclude the discussion of Strategy \ref{strat:iwaniec}, we note that, assuming \ref{it:estimateminimizer} holds, it remains to establish a nonlinear analogue of the classical Bartle--Graves theorem \cite{Bartle1952}. This theorem  states that a bounded linear surjection between Banach spaces has a bounded and continuous (but possibly nonlinear) right inverse, see \cite[page 86]{Bessaga1975} for a good overview. Without extra assumptions, the Bartle--Graves theorem does not generalise to multilinear mappings \cite{Fernandez1998}. 
However, one may use the results in \cite{GuerraKochLindberg2020b} and \cite{Jayne1985} to prove a partial result towards a nonlinear Bartle--Graves theorem for the Jacobian: assuming surjectivity of $\tp{J} \colon \dot{W}^{1,np}(\R^n,\R^n) \to \mathscr{H}^p(\R^n)$, there is a bounded right inverse that is continuous outside a meagre set, although we do not prove such a result here.

\subsection*{Outline}

This paper is structured as follows. In Section \ref{sec:radialdata} we consider the regularity of polar representations of a Sobolev map and we recall some useful formulae in polar coordinates. In Section \ref{sec:aclassofdata} we prove a more general version of Theorem \ref{bigthm:helein}\ref{it:averagecond} and in Section \ref{sec:counterexample} we prove Theorem \ref{bigthm:helein}\ref{it:counterex}. Section \ref{sec:nonuniqueness} contains the proof of Theorem \ref{bigthm:manyminimisers}.

\subsection*{Notation}

We use polar coordinates $z=r e^{i \theta}= x + iy \in \C\cong \R^2$ in the plane. We write $B_r(x)$ for the usual Euclidean balls in $\R^n$, and $\mb S_r\equiv \p B_r$ (when $x$ is omitted, it is understood that $x=0$). It is also useful to have notation for annuli: for $0<r<R$,
$$\mb A(r,R)\equiv \{z\in \C: r<|z|<R\}.$$
We will also abuse this notation slightly by setting $\mb A(0,R)\equiv B_R(0)$. Here $|z|$ denotes the Euclidean norm of $z\in \C$ and likewise for $A\in \R^{n\times n}$ we write $|A|\equiv \tp{tr}(A A^\tp{T})^\frac 1 2$ for the Euclidean norm. 
Finally, and unless stated otherwise, $p$ is a real number in $[1,+\infty)$.

\section{Polar coordinates and generalised radial stretchings}\label{sec:radialdata}

Given a planar Sobolev map $u\in W^{1,p}(\R^2,\R^2)$, we consider
polar coordinates both in the domain and in the target; that is, we
want to write
\begin{equation}
u(r e^{i \theta}) = \psi(r,\theta) \exp( i
\gamma(r,\theta))\label{eq:rep}
\end{equation}
for some functions $\psi\colon (0,\infty)\times[0,2\pi]\to [0,\infty)$
and $\gamma\colon (0,\infty)\times [0,2\pi] \to \R$, where furthermore we must have
the compatibility conditions
\begin{equation}
\label{eq:compatibilitypolarcoords}
\psi(r,0)=\psi(r,2\pi) \quad \tp{ and } \quad \gamma(r,0)-\gamma(r,2\pi)\in 2\pi \Z\qquad \tp{ for all }r.
\end{equation}
We will freely identify $(r,\theta)\equiv r e^{i \theta}$, adopting either notation whenever it is more convenient.

The existence of a representation as in \eqref{eq:rep} is a standard problem in lifting theory:

\begin{prop}\label{prop:lifting}
Let $0\leq R_1<R_2$ and $p\geq 2$. Let $u\in W^{1,p}(\mb A(R_1,R_2),\R^2)$ and, if $p=2$, suppose moreover that $u$ is continuous. Assume $u^{-1}(0)\subseteq \{0\}$. 
Then there are continuous functions 
$$\psi\in W^{1,p}\left([R_1,R_2]\times [0,2\pi]\right), \qquad \gamma\in W^{1,p}\left((\max\{R_1,\e\},R_2)\times [0,2\pi]\right),$$ where $\e\in(0,R_2)$ is arbitrary, which satisfy \eqref{eq:compatibilitypolarcoords} and such that the representation \eqref{eq:rep} holds. 

\end{prop}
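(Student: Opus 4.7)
The plan is to construct $\psi$ and $\gamma$ separately. Setting $\psi(r,\theta)\equiv|u(re^{i\theta})|$ produces a continuous function (continuity of $u$ being automatic by Morrey's embedding when $p>2$ and assumed when $p=2$), and Sobolev regularity on $[R_1,R_2]\times[0,2\pi]$ follows from the fact that $w\mapsto|w|$ is $1$-Lipschitz, so the Sobolev chain rule applies. The compatibility $\psi(r,0)=\psi(r,2\pi)$ is immediate from the $2\pi$-periodicity of the polar map.

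For $\gamma$, fix $\e\in(0,R_2)$ and restrict attention to the open annulus $\mb A(\max\{R_1,\e\},R_2)$, on which $u$ is continuous and, by the hypothesis $u^{-1}(0)\subseteq\{0\}$, nowhere vanishing; hence $v\equiv u/|u|$ is continuous, $\mb S^1$-valued, and in $W^{1,p}$ (by the chain rule and the fact that $|u|$ is bounded away from zero on this annulus). Pulling back to the rectangle $Q\equiv(\max\{R_1,\e\},R_2)\times[0,2\pi]$, which is simply connected, the path-lifting property of the covering $t\mapsto e^{it}\colon\R\to\mb S^1$ produces a continuous real-valued lift $\gamma\colon Q\to\R$ with $v(re^{i\theta})=e^{i\gamma(r,\theta)}$, unique modulo an additive integer multiple of $2\pi$.

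The main difficulty is the Sobolev regularity of $\gamma$. The key observation is that around each point $(r_0,\theta_0)\in Q$, setting $\gamma_0\equiv\gamma(r_0,\theta_0)$, continuity of $\gamma$ yields a neighbourhood on which $\tp{Re}(e^{-i\gamma_0}v)>1/2$; on such a neighbourhood the lift is given by the explicit formula
\[
\gamma=\gamma_0+\arctan\!\bigl(\tp{Im}(e^{-i\gamma_0}v)/\tp{Re}(e^{-i\gamma_0}v)\bigr),
\]
which is a smooth bounded function of $v$. Since $v\in W^{1,p}\cap C^0$, the Sobolev chain rule gives $\gamma\in W^{1,p}$ locally with $|\nabla\gamma|\lesssim|\nabla v|$ pointwise. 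A finite cover of $\overline{Q}$ by such neighbourhoods (made possible by uniform continuity of $\gamma$) glues to give $\gamma\in W^{1,p}(Q)$.

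Finally, the compatibility condition $\gamma(r,0)-\gamma(r,2\pi)\in 2\pi\Z$ follows from $v(r,0)=v(r,2\pi)$ combined with continuity of $\gamma$, which forces the integer-valued map $r\mapsto(\gamma(r,0)-\gamma(r,2\pi))/(2\pi)$ to be locally constant and hence constant; in fact it equals the winding number of $\theta\mapsto v(r,\theta)$ around the origin. The hypothesis $p\geq2$ (with continuity assumed for $p=2$) is used precisely to guarantee the existence of a \emph{continuous} lift, thereby side-stepping the much more delicate Bourgain--Brezis--Mironescu lifting theory.
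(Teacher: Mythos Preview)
Your proof is correct and takes a somewhat different, more self-contained route than the paper's. The paper covers the annulus by two overlapping slit annuli (``keyhole domains'') $\mb A_{1,\e}$ and $\mb A_{2,\e}$, on each of which it invokes the lifting results of Bethuel and Bourgain--Brezis--Mironescu for $W^{1,p}(\Omega,\mb S^1)$ maps to obtain phases $\gamma_1,\gamma_2$, and then glues them by observing that $\gamma_1-\gamma_2$ is a continuous $2\pi\Z$-valued function on the overlap, hence locally constant. You instead pass directly to the simply connected rectangle $Q$ in $(r,\theta)$-coordinates and obtain a single continuous lift $\gamma$ via the elementary covering map $\R\to\mb S^1$, which eliminates the gluing step altogether; you then establish the $W^{1,p}$ regularity by hand through the local $\arctan$ formula. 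In effect you are unpacking, in the continuous case, the proof of the cited lifting theorem, trading a literature reference for a short direct argument. Two minor remarks: the finite-cover step is unnecessary, since the local identity already gives $|\nabla\gamma|=|\nabla v|$ pointwise a.e., whence $\nabla\gamma\in L^p$ as soon as $\nabla v\in L^p$; and the claim that $|u|$ is bounded away from zero on the open annulus $\mb A(\max\{R_1,\e\},R_2)$ tacitly uses continuity of $u$ up to the outer circle $\mb S_{R_2}$ (automatic for $p>2$), a point the paper's proof also leaves implicit.
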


\begin{proof}
Let $\e>0$ 
and consider the keyhole domains
\begin{align*}
&\mb A_{1,\e} \equiv [\max\{R_1,\e\},R_2]\times [\e,2\pi-\e],\\
&\mb A_{2,\e} \equiv [\max(R_1,\e),R_2]\times ([0,\pi-\e]\cup [\pi+\e,2\pi]).
\end{align*}
We freely identify $\mb A_{i,\e}$ with the respective domains in $\R^2$.

We first show the existence of a representation \eqref{eq:rep} in each $\mb A_{i,\e}$.
Note that if $u\in W^{1,p}$ then $\psi=|u|$ is also in $W^{1,p}$ and is continuous whenever $u$ is. Thus, since $0\not\in u(\mb A_{i,\e})$, it suffices to prove the existence of a continuous function $\gamma_i\in W^{1,p}(\mb A_{i,\e},\R)$ such that $u/|u|= e^{i \gamma_i}$ for $i=1,2$. Since $u$ is continuous, $u/|u|\in W^{1,p}(\mb A_{i,\e},\mb S^1)$, and so the existence of $\gamma_i$ follows from the results in \cite{Bethuel1988}, see also \cite{Bourgain2000}.

Thus, for almost every $(r,\theta)\in \mb A_{1,\e}\cap\mb A_{2,\e}$,
\begin{align*}
\psi(r,\theta)e^{i\gamma_1(r,\theta)}=u(r e^{i \theta})=\psi(r,\theta) e^{i\gamma_2(r,\theta)}\iff \gamma_1(r,\theta)-\gamma_2(r,\theta) = 2\pi k(r,\theta),
\end{align*}
where $k(r,\theta)\in \Z$. As $\gamma_1$, $\gamma_2$ are continuous in $\mb A_{1,\e}\cap \mb A_{2,\e}$, we must have \begin{align*}k(r,\theta)= \begin{cases}
k_1 \qquad \text{ for } \e<\theta<\pi-\e,\\
k_2 \qquad \text{ for } \pi+\e<\theta<\e.
\end{cases}
\end{align*}
Without loss of generality, upon redefining $\gamma_1$ we may assume $k_1=0$. Hence we may define
\begin{align*}
\gamma_\e(r,\theta)=\begin{cases}
					\gamma_1(r,\theta) &\text{ if } (r,\theta)\in\mb A_{1,\e},\\
					\gamma_2(r,\theta) &\text{ if } (r,\theta)\in\mb A_{2,\e}.
				\end{cases}	
\end{align*}
By a similar argument, we see that we may take $\gamma_\e=\gamma_\delta$ in $\mb A(R_1,R_2)\setminus (B_\delta\cup B_\e)$, so that in fact $u=\psi(r,\theta)e^{i\gamma(r,\theta)}$ with $\gamma\in W^{1,p}(\max(R_1,\e),R_2)\times [0,2\pi])$ for all $\e>0$. 
The conclusion follows.
\end{proof}

We remark that the conclusion of Proposition \ref{prop:lifting} is false if $p<2$, see \cite[§4]{Bourgain2000}. 

\begin{cor}
In the setting of Proposition \ref{prop:lifting}, we have a.e.\ the formulae
  \begin{gather}
\label{eq:jacobianpolarcoords}    \J u =\frac{1}{2r} \frac{\p(\psi^2,\gamma)}{\p(r,\theta)} = \frac{1}{2r}\left(\p_r(\psi^2) \p_\theta \gamma -\p_\theta(\psi^2)\p_r \gamma\right),\\
\label{eq:dirichletenergypolarcoords}    |\D u|^2=|\p_r \psi|^2+|\psi \p_r \gamma|^2 
    +\frac{|\p_\theta \psi|^2}{r^2} +\frac{|\psi
      \p_\theta \gamma|^2 }{r^2}.
  \end{gather}
\end{cor}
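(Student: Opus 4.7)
The plan is to derive both formulae by direct computation from the representation $u(r,\theta)=\psi(r,\theta)e^{i\gamma(r,\theta)}$, using the fact that by Proposition \ref{prop:lifting} the functions $\psi,\gamma$ are Sobolev (with $p\ge 2$) and continuous on the relevant keyhole subdomains, so the product and chain rules apply almost everywhere. In particular, I would work locally on a subdomain where $\gamma$ is single-valued and continuous (say on each $\mb A_{i,\e}$ from the proof above), and the final identities then extend a.e.\ to $\mb A(R_1,R_2)\setminus\{0\}$ by the patching argument in the proposition.

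First I would differentiate the representation to obtain
\[
\p_r u=(\p_r\psi)\,e^{i\gamma}+i\psi(\p_r\gamma)\,e^{i\gamma},\qquad
\p_\theta u=(\p_\theta\psi)\,e^{i\gamma}+i\psi(\p_\theta\gamma)\,e^{i\gamma}.
\]
Since $\{e^{i\gamma},ie^{i\gamma}\}$ is an orthonormal frame of $\R^2$, we immediately get
\[
|\p_r u|^2=|\p_r\psi|^2+\psi^2|\p_r\gamma|^2,\qquad |\p_\theta u|^2=|\p_\theta\psi|^2+\psi^2|\p_\theta\gamma|^2.
\]
Combining with the standard polar decomposition $|\D u|^2=|\p_r u|^2+r^{-2}|\p_\theta u|^2$, which follows from the fact that $\p_r$ and $r^{-1}\p_\theta$ correspond to an orthonormal basis, yields \eqref{eq:dirichletenergypolarcoords}.

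For the Jacobian, I would use the wedge identity $\J u=\p_r u\wedge(r^{-1}\p_\theta u)$ and the relations $e^{i\gamma}\wedge e^{i\gamma}=0$ and $e^{i\gamma}\wedge ie^{i\gamma}=1$. Expanding the wedge product gives
\[
r\,\J u=\psi\bigl(\p_r\psi\,\p_\theta\gamma-\p_\theta\psi\,\p_r\gamma\bigr)=\tfrac12\bigl(\p_r(\psi^2)\,\p_\theta\gamma-\p_\theta(\psi^2)\,\p_r\gamma\bigr),
\]
which is exactly \eqref{eq:jacobianpolarcoords}. Alternatively, one could view $u$ as the composition $Q\circ(\psi,\gamma)\circ P^{-1}$, where $P(r,\theta)=re^{i\theta}$ and $Q(s,\phi)=se^{i\phi}$, and multiply Jacobian determinants; this leads to the same identity.

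The only real obstacle is justifying the pointwise calculus for the Sobolev representatives. In the range $p\ge 2$ this is unproblematic: $\psi,\gamma$ are continuous and weakly differentiable on the keyhole pieces, so the product and chain rules hold a.e., and the polar change of variables (which is a smooth diffeomorphism away from $0$) transports these a.e.\ identities from $(0,\infty)\times[0,2\pi]$ to $\mb A(R_1,R_2)\setminus\{0\}$. Since $\{0\}$ has zero measure in $\R^2$, the formulae hold a.e.\ in the original domain, which is all that is claimed.
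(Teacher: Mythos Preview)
Your proposal is correct and follows essentially the same approach as the paper: both perform the formal computation from the representation $u=\psi e^{i\gamma}$ and then invoke a standard justification. The only cosmetic difference is that the paper appeals to a density argument (noting that the right-hand sides are locally integrable functions of $\psi,\gamma$ and their weak derivatives), whereas you justify the computation directly via the Sobolev product and chain rules on the keyhole pieces; both routes are routine in this regularity range.
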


\begin{proof}
It is not difficult to formally derive the above formulae whenever the representation \eqref{eq:rep} holds. To make the argument rigorous it suffices to note that, due to the regularity of $\psi$ and $\gamma$, the right-hand sides in \eqref{eq:jacobianpolarcoords}--\eqref{eq:dirichletenergypolarcoords} define locally integrable functions. Thus the corollary follows by a standard density argument.
\end{proof}

A function $f\colon B_R(0)\to \R$ is said to be \textit{radially symmetric} if $|x|=|y|\implies f(x)=f(y)$ and we identify any such function with a function $f\colon [0,+\infty)\to \R$ in the obvious way. For such a function, it is natural to look for solutions of \eqref{eq:jac} possessing some symmetry, in particular satisfying $\p_\theta \psi=0$ and $\p_r \gamma = 0$ if a representation as in \eqref{eq:rep} holds:
\begin{ndef}\label{def:radialstretching} 
The class of \textit{generalised radial stretchings} consists of maps of the form
\[
\phi_k(z)\equiv \frac{\rho(r)}{\sqrt{|k|}} e^{i k \theta}
 \]
 where $k\in \Z\setminus\{0\}$ is the topological degree of the map and $\rho\geq 0$. If $k=1$ we refer to such maps simply as radial stretchings.
\end{ndef}
Generalised radial stretchings are spherically symmetric in the sense that they map circles centred at zero to circles centred at zero. The following is a useful criterion concerning the Sobolev regularity of generalised radial stretchings:

\begin{lemma}\label{lemma:ball}
Let $p\in [1,\infty)$ and $k\in \Z\setminus\{0\}$. Given $R\in (0,+\infty]$, $\phi_k\in {\dot W}^{1,p}(B_R(0), B_R(0))$  if and only if $\rho$ is absolutely continuous on $(0,R)$ and 
$$\Vert \D \phi_k \Vert_{L^p(B_R(0))}^p\approx 
\int_0^R \left(\bigg|\frac{\dot \rho(r)}{k}\bigg|^p + \bigg|k\, \frac{\rho(r)}{r}\bigg|^p \right)r \d r<\infty.$$
\end{lemma}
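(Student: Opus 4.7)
The plan is to exploit the polar representation explicitly, using the machinery developed earlier in this section. Setting $\psi(r,\theta) = \rho(r)/\sqrt{|k|}$ and $\gamma(r,\theta) = k\theta$ in \eqref{eq:rep}, two of the four summands in \eqref{eq:dirichletenergypolarcoords} vanish and the pointwise identity
$$
|\D\phi_k(re^{i\theta})|^2 \;=\; \frac{\dot\rho(r)^2}{|k|} \;+\; \frac{|k|\,\rho(r)^2}{r^2}
$$
holds at every point where $\rho$ is classically differentiable. Combining this with the elementary comparison $(a^2+b^2)^{p/2}\approx a^p + b^p$ for $a,b\geq 0$ (with constants depending only on $p$), integrating in polar coordinates, and absorbing the $k$-dependent factors into multiplicative constants yields the claimed norm equivalence.

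For the forward direction, assume $\phi_k\in \dot W^{1,p}(B_R(0), B_R(0))$. Composing with polar coordinates and invoking the ACL characterisation of Sobolev functions, for a.e.\ $\theta_0\in [0,2\pi)$ the map $r\mapsto \phi_k(re^{i\theta_0}) = (\rho(r)/\sqrt{|k|})e^{ik\theta_0}$ is absolutely continuous on each compact subinterval of $(0,R)$. Hence $\rho$ is absolutely continuous on $(0,R)$, its classical derivative agrees a.e.\ with the weak radial derivative of $\phi_k$ extracted from the polar formulae, and the displayed identity above forces the integral on the right-hand side to be finite.

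For the converse, suppose $\rho$ is absolutely continuous on $(0,R)$ and the integral is finite. Since polar coordinates are a smooth diffeomorphism on $B_R\setminus\{0\}$, the chain rule produces classical weak derivatives of $\phi_k$ on $B_R\setminus\{0\}$ whose pointwise norm equals the formula above, and Fubini bounds $\Vert\D\phi_k\Vert_{L^p(B_R\setminus\{0\})}^p$ by the right-hand side. To promote this to membership in $\dot W^{1,p}(B_R(0))$ I would test against $\eta\cdot\chi_\varepsilon$ with $\eta\in C_c^\infty(B_R)$ and $\chi_\varepsilon$ a smooth radial cutoff which vanishes on $B_{\varepsilon/2}$, equals $1$ off $B_\varepsilon$, and satisfies $|\nabla\chi_\varepsilon|\lesssim 1/\varepsilon$; the error picked up from the cutoff is controlled by $\Vert\eta\Vert_\infty\,\varepsilon\,\sup_{[\varepsilon/2,\varepsilon]}\rho$, which vanishes as $\varepsilon\to 0$ either via the range constraint $\rho\leq R\sqrt{|k|}$ (when $R<\infty$) or, when $R=\infty$, by extracting a sequence $\varepsilon_n\to 0$ with $\varepsilon_n\rho(\varepsilon_n)\to 0$ from the near-zero integrability of $\rho$.

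The main technical point is precisely this removability step at the origin, which is where the target restriction $\phi_k(B_R)\subseteq B_R$ pays off; everything else is a direct change-of-variables computation following from the polar formulae already established in the section.
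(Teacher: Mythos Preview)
The paper does not prove this lemma, deferring to \cite[Lemma~4.1]{Ball1982} as a straightforward adaptation; your argument is exactly that adaptation, so there is nothing substantive to compare. Two minor points are worth noting. First, formula \eqref{eq:dirichletenergypolarcoords} is stated under the hypotheses of Proposition~\ref{prop:lifting} (in particular $p\geq 2$ and $u^{-1}(0)\subseteq\{0\}$), which need not hold here; since the polar representation of $\phi_k$ is explicit, you should remark that the pointwise identity follows by direct differentiation without invoking the lifting. Second, in the $R=\infty$ removability step it is cleaner to bound the cutoff error by $\varepsilon^{-1}\int_{\varepsilon/2}^{\varepsilon}\rho(r)\,r\d r$ and control this via H\"older from the assumed finiteness of $\int_0^1(\rho/r)^p\,r\d r$, which gives convergence along the full family $\varepsilon\to 0$ rather than requiring a subsequence extracted from a sup bound.
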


We omit the proof of the lemma, as it is a straightforward adaptation of \cite[Lemma 4.1]{Ball1982}. 

It is not the case that any radially symmetric $f\in \mathscr H^p(\R^2)$ admits generalised radial stretchings as solutions of \eqref{eq:jac}. Indeed, from \eqref{eq:jacobianpolarcoords}, formally we see that 
\begin{equation}
\label{eq:defrho}
\J \phi_k = f \quad \implies \quad \rho(r) = \sqrt{\frac 1 k\int_0^r 2 s f(s)\d s}
\end{equation}
and hence, for the equation $\J \phi_k=f$ to be solvable for some $k\in \Z \exc\{0\}$, we must have
\begin{equation}
\tp{either } \int_0^r 2 s f(s)\d s\leq 0\tp{ for a.e.\ } r, \qquad \tp{or }
\int_0^r 2 s f(s)\d s\geq 0\tp{ for a.e.\ } r.
\label{eq:orientation}
\end{equation}
Conversely, whenever $f$ satisfies \eqref{eq:orientation}, we will take $\rho$ as in \eqref{eq:defrho}, so that $\phi_k$ is a \textit{formal} solution of $\J\phi_k =f$. Indeed, note that \eqref{eq:orientation} is not enough to ensure the existence of generalised radial stretching solutions with the required regularity: 

\begin{ex}\label{ex:annulus}
If $f=1_{\mb A(1,2)}$ then, for any $k\in \N\exc\{0\}$, $\phi_k$ is in $\bigcup_{1\leq q <2} W^{1,q}\exc W^{1,2}(B_2,\R^2)$.
\end{ex}

The claim in Example \ref{ex:annulus} follows  readily from Lemma \ref{lemma:ball}.
In the next section we find a condition on $f$ which ensures that $\phi_k$ has ${\dot W}^{1,2p}$-regularity. For other related results see  \cite{GuerraKochLindberg2020}, \cite[§3]{Lindberg2015} and \cite[§7]{Ye1994}.

\section{A class of data with symmetric energy minimisers}
\label{sec:aclassofdata}

The key step in establishing Theorem \ref{bigthm:helein} is the following more general proposition which may be of independent interest.

\begin{prop}\label{prop:energyquasiminim}
Let $p\in [1,\infty)$ and $f\in \mathscr H^p(\R^2)$ be a radially symmetric function such that, for some $\lambda\geq 1$ and a.e.\ $r\in (0,+\infty)$,
\begin{equation}\label{eq:lambdadata}
| f(r)|\leq  \lambda \fint_{B_r(0)} f  \d x.
\end{equation}
 Let $\phi_1$ denote the radial stretching solving $\J\phi_1=f$.

\begin{enumerate}
\item\label{it:improvedintegrability} We have the estimate
\begin{equation}
\Vert \D \phi_1 \Vert_{L^{2p}(\R^2)}^2 \leq 
C(\lambda)\, \Vert f \Vert_{\mathscr H^p(\R^2)}.
\label{eq:improvedintegrability}
\end{equation}
\item\label{it:quasiminimiser} Let $u\in W^{1,2}_\tp{loc}(\R^2, \R^2)$ be a solution of
$\J u =f$  
such that, for a.e.\ $r\in (0,+\infty)$,
\begin{equation}
\label{eq:isoperimetric}
4 \pi \int_{B_r} \J u \d x \leq \biggr(\int_{\mb S_r}|\D u\cdot \nu^\bot|\d\theta\biggr)^2.
\end{equation}
Then, with $Z$ denoting the {\normalfont Zhukovsky function} $Z(\lambda)\equiv \frac 1 2 \left(\frac 1 \lambda + \lambda\right)$, we have the estimate
\begin{equation}
\label{eq:minimising}
\int_{\mb S_r} |\D \phi_1|^{2p} \leq Z(\lambda)\int_{\mb S_r} |\D u|^{2p}
\end{equation} 
for a.e.\ $r\in (0,\infty).$
\item\label{it:uniqueness} If $\lambda= 1$ and if \eqref{eq:minimising} holds with equality then \eqref{eq:isoperimetric} holds with equality.
\end{enumerate} 
\end{prop}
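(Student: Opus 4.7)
\textbf{Part \ref{it:improvedintegrability}.} The plan is to apply Lemma \ref{lemma:ball} to rewrite the energy as a one-dimensional integral: $\|\D\phi_1\|_{L^{2p}(\R^2)}^{2p}\approx \int_0^\infty (|\dot\rho|^{2p}+|\rho/r|^{2p})r\,dr$. Differentiating the defining identity $\rho(r)^2=\int_0^r 2sf\,ds$ gives $\dot\rho = rf(r)/\rho$, while dividing by $r^2$ gives $(\rho/r)^2 = \fint_{B_r} f\,dx$; hence hypothesis \eqref{eq:lambdadata} translates pointwise to $|\dot\rho|\le \lambda|\rho/r|$. This reduces the problem to showing $\int_0^\infty(\fint_{B_r}f)^p r\,dr\lesssim \|f\|_{\mathscr H^p(\R^2)}^p$. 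For $p>1$ I would make the substitution $s=r^2$ and then invoke the classical one-dimensional Hardy inequality, which produces $\|f\|_{L^p(\R^2)}^p$ on the right-hand side. For $p=1$ the radial average $r\mapsto \fint_{B_r}f$ is pointwise dominated by the Hardy--Littlewood maximal function of $f$, and the required $L^1((0,\infty), r\,dr)$ bound then follows from $\mathscr H^1\to L^1$ boundedness of the maximal operator.

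\textbf{Part \ref{it:quasiminimiser}.} I would argue on each sphere $\mb S_r$ in polar coordinates. Set $A_r:=\int_0^{2\pi}|\p_r u|^2\,d\theta$ and $B_r:=r^{-2}\int_0^{2\pi}|\p_\theta u|^2\,d\theta$, so that $\int_{\mb S_r}|\D u|^2=r(A_r+B_r)$. The pointwise inequality $|\J u|\le|\p_r u||\p_\theta u|/r$ combined with Cauchy--Schwarz gives $A_r B_r\ge 4\pi^2 f^2$, while \eqref{eq:isoperimetric} combined with Cauchy--Schwarz on the tangential derivative yields $B_r\ge 2\pi(\rho/r)^2$. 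Introduce $t:=\dot\rho r/\rho$, so that $|t|\le\lambda$, $f=t(\rho/r)^2$, and $|\D\phi_1|^2=(\rho/r)^2(1+t^2)$. A short constrained minimisation of $A_r+B_r$ under these two inequalities yields
\[
A_r+B_r\ge \begin{cases}2\pi(\rho/r)^2(1+t^2)&\text{if } |t|\le 1,\\ 4\pi|t|(\rho/r)^2&\text{if } |t|\ge 1,\end{cases}
\]
so that $\int_{\mb S_r}|\D u|^2\ge Z(\lambda)^{-1}\int_{\mb S_r}|\D\phi_1|^2$, using monotonicity of $Z$ on $[1,\infty)$ and $Z\ge 1$. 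This settles $p=1$. For $p>1$ I would combine this with Jensen's inequality on $\mb S_r$, namely $\int_{\mb S_r}|\D u|^{2p}\ge (2\pi r)^{1-p}\bigl(\int_{\mb S_r}|\D u|^2\bigr)^p$, which holds with equality for $\phi_1$ since $|\D\phi_1|$ is constant on each sphere, and the desired bound \eqref{eq:minimising} follows. The main technical obstacle is executing the constrained optimisation cleanly and handling the $|t|\le 1$ and $|t|\ge 1$ regimes in a uniform way.

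\textbf{Part \ref{it:uniqueness}.} Equality in \eqref{eq:minimising} with $\lambda=1$ propagates back through every step of the chain: equality in Jensen forces $|\D u|$ to be constant on $\mb S_r$; equality in the constrained minimisation for $A_r+B_r$ forces $B_r=2\pi(\rho/r)^2$; and equality in the Cauchy--Schwarz step yielding this bound forces $|\p_\theta u|$ to be constant on $\mb S_r$ with $\int_0^{2\pi}|\p_\theta u|\,d\theta=2\pi\rho$, which is precisely equality in the isoperimetric inequality \eqref{eq:isoperimetric}.
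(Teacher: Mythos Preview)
Your arguments for Parts \ref{it:quasiminimiser} and \ref{it:uniqueness} are correct and amount to a repackaging of the paper's proof. Where the paper introduces the auxiliary convex function $\psi(a,b)=a+b^2/a$ and appeals to Lemma~\ref{lemma:psi} (convexity plus the Zhukovsky estimate), you instead phrase the same step as an explicit constrained minimisation of $A_r+B_r$ subject to $A_rB_r\ge 4\pi^2f^2$ and $B_r\ge 2\pi(\rho/r)^2$. These are two presentations of the same computation: your constraint $A_rB_r\ge 4\pi^2 f^2$ is the integrated version of the paper's cofactor inequality \eqref{eq:CS}, and your case split $|t|\lessgtr 1$ is exactly Lemma~\ref{lemma:psi}\ref{it:zhukovsky}. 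Either route gives the factor $Z(\lambda)$ for $p=1$ and then $Z(\lambda)^p$ after Jensen for $p>1$, which is also what the paper's displayed chain actually yields. Your Part \ref{it:improvedintegrability} for $p>1$ via the one-dimensional Hardy inequality is likewise fine and essentially interchangeable with the paper's maximal-function bound.

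The genuine gap is your treatment of Part \ref{it:improvedintegrability} when $p=1$. You write that the bound ``follows from $\mathscr H^1\to L^1$ boundedness of the maximal operator'', but the Hardy--Littlewood maximal function is \emph{never} in $L^1(\R^2)$ for nonzero $f$: one has $Mf(x)\gtrsim \|f\|_{L^1}|x|^{-2}$ at infinity. So dominating $\fint_{B_r}f$ by $Mf$ and integrating gives nothing. The maximal characterisation of $\mathscr H^1$ uses a \emph{smooth} convolution kernel, not the sharp averages $\fint_{B_r}f$, and in any case you are integrating the averages themselves rather than a sup. The paper circumvents this by passing to the one-dimensional Hardy space $\mathscr H^1(\R,|r|\,dr)$, taking an atomic decomposition of $f$ there, and checking directly that for each atom $a$ supported in $[r_1,r_2]$ one has
\[
\int_0^\infty \frac{1}{r}\int_{-r}^{r} a(s)\,|s|\,ds\,dr \;\le\; 1,
\]
using both the $L^\infty$ bound and the cancellation $\int a(s)|s|\,ds=0$ (the latter is what kills the tail that would otherwise make the integral diverge). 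Summing over atoms then gives \eqref{eq:improvedintegrability}. Some argument exploiting the cancellation built into $\mathscr H^1$ is unavoidable here; the maximal-function shortcut does not work at the endpoint.
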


In the statement of the theorem, as well as in its proof, $u$ denotes the precise representative of the equivalence class $[u]\in W^{1,2p}_\tp{loc}$. We refer the reader to \cite{Evans2015} for the definition and properties of precise representatives. 

We note that condition \eqref{eq:isoperimetric} is a parametric version of the isoperimetric inequality. In particular, it holds under natural assumptions including the setting of Theorem \ref{bigthm:helein}, see already Proposition \ref{prop:isoperimetric}. 

Before proceeding with the proof, it is useful to note that condition (\ref{eq:lambdadata}) can be rewritten in terms of $\phi_1$ as
\begin{equation}
\label{eq:lambdaradial}
 |\dot \rho(r)|\leq  \lambda \left| \frac{\rho(r)}{r} \right|,
\end{equation}
see also the discussion in the introduction.
 It is worth mentioning that, in \eqref{eq:lambdadata}, we make implicitly a \textit{choice of orientation}. Indeed, in order to ensure the existence of generalised radial stretchings solving the equation, it must be the case that the map $r\mapsto \int_{B_r} f \d x$ does not change sign, see \eqref{eq:orientation}. Clearly  \eqref{eq:lambdadata} implies that this map is non-negative. There is an analogue of Proposition \ref{prop:energyquasiminim} in the case where $\int_{B_r} f \d x$ is always non-positive: in that case, we replace $\phi_1$ with $\phi_{-1}$.

\begin{proof}[Proof of Proposition \ref{prop:energyquasiminim}\ref{it:improvedintegrability}]
For the case $p>1$, we combine \eqref{eq:lambdaradial} with Lemma \ref{lemma:ball} to get
\begin{equation*}
\begin{split}
\Vert \D \phi_1 \Vert_{L^{2p}(\R^2)}^{2p}
&\approx \int_0^\infty \left(|\dot \rho(r)|^{2p} + \left|\frac{\rho(r)}{r}\right|^{2p} \right)r \d r
 \lesssim_\lambda \int_0^\infty \left|\frac{\rho(r)}{r}\right|^{2p} r \d r
= \int_{\R^2}\left|\fint_{\bar{B}_{|x|}(0)} f \d y\right|^p \d x.
\end{split}
\end{equation*}
Denoting by $M$ be the (non-centred) Hardy--Littlewood maximal function, we have
$$\Vert \D \phi_1 \Vert_{L^{2p}(\R^2)}^{2p}
 \lesssim_\lambda \int_{\R^2} \left|M f(x)\right|^p \d x\lesssim \int_{\R^2} |f(x)|^p \d x,$$
as wished.

For the case $p=1$, we need to argue in a more careful way and we use the fact that
$$\Vert f(|r|) \Vert_{\mathscr H^1(\R,|r|\d r)} \approx \Vert f(|x|) \Vert_{\mathscr H^1(\R^2,\d x)},$$
see the proof of \cite[Corollary (2.27)]{Coifman1977}. Recall that an $\mathscr H^1(\R, |r|\d r)$-atom is simply a function $a\colon \R \to \R$ such that
$$\supp a\subset [r_1,r_2], \qquad \Vert a \Vert_\infty\leq \frac{1}{\int_{r_1}^{r_2} |s| \d s}, \qquad \int_\R a(r) |r| \d r=0,$$ 
for some real numbers $r_1<r_2$, and that moreover for any $f\in \mathscr H^1(\R,|r|\d r)$ there exist atoms $a_i$ and real numbers $\lambda_i\in \R$ such that
\begin{equation} \label{eq:atomicdecomposition}
0=\lim_{N \to \infty} \left\Vert f - \sum_{i=1}^N \lambda_i a_i \right\Vert_{\mathscr H^1(\R, |r| \d r)}, \qquad \sum_{i=1}^\infty |\lambda_i| \lesssim \Vert f \Vert_{\mathscr H^1(\R,|r| \d r)}.
\end{equation}
Arguing as in the case $p>1$ we see that
$$\Vert \D \phi_1 \Vert_{L^{2p}(\R^2)}^{2p} \lesssim_\lambda \int_0^\infty \frac{1}{r} \int_0^r 2 \, f(s) s \d s\d r = \lim_{\e \to 0} \int_\e^{1/\e} \frac{1}{r} \int_{-r}^r f(s)|s|\d s\d r,$$
where we also used $f=f(|r|)$ in the last equality.

Fix $\e > 0$. By using \eqref{eq:atomicdecomposition} and the dominated convergence theorem,
\[\int_\e^{1/\e} \frac{1}{r} \int_{-r}^r f(s)|s|\d s\d r = \lim_{N \to \infty} \sum_{j=1}^N \lambda_j \int_\e^{1/\e} \frac{1}{r} \int_{-r}^r a_j(s)|s|\d s\d r.\]
When $N \in \N$, suppose $a$ is one of the atoms $a_1,\ldots,a_N$ and let $0\leq \tilde r_1< \tilde r_2$ be, respectively, the minimum and the maximum of $|\cdot |$ over $[r_1,r_2]$.
Then
\begin{align*}
\int_\e^{1/\e} \frac{1}{r} \int_{-r}^r a(s)|s|\d s\d r
&= \int_{\max\{\e,\tilde r_1\}}^{\min\{1/\e,\tilde r_2\}} \frac{1}{r} \int_{-r}^r a(s)|s|\d s\d r
\\ &\leq \frac{\int_{\tilde r_1}^{\tilde r_2} \frac{1}{r} \int_{-r}^r|s|\d s\d r}{\int_{r_1}^{r_2} |s| \d s}
=
 \frac{\tilde r_2^2-\tilde r_1^2}{2\int_{r_1}^{r_2} |s| \d s}\leq 1.
\end{align*}
By letting first $N \to \infty$ and then $\e \to 0$, the conclusion follows from \eqref{eq:atomicdecomposition}.
\end{proof}

Before finishing the proof of Proposition \ref{prop:energyquasiminim}, we record the following elementary lemma:
\begin{lemma}\label{lemma:psi}
Define $\psi\colon(0,\infty)\times \R\to\R$ by $\psi(a,b)\equiv a+b^2/a$. Then
  \begin{enumerate}
  \item\label{it:cvx} the function $\psi$ is convex;
  \item\label{it:min} for each $b\in \R$, the function $a\mapsto \psi(a,b)$ is increasing in $(0,b)$ and decreasing in $(b,+\infty)$ and it has a global minimum at $a=|b|$;
  \item \label{it:zhukovsky} for $\lambda>0$, if $a_2\leq a_1$ and $|b|\leq \lambda a_2$ then
  $\psi(a_2,b)\leq Z(\lambda) \psi(a_1,b)$.
  \end{enumerate}
 \end{lemma}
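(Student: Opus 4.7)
My plan is to treat the three parts separately, all of which reduce to elementary calculus on $\psi(a,b) = a + b^2/a = (a^2+b^2)/a$. Parts (i) and (ii) are routine; the only substantive calculation is the two-variable optimisation in (iii), whose extremal value turns out to be precisely $Z(\lambda)$.

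For (i), I would observe that $\psi$ is the sum of the linear (hence convex) function $(a,b)\mapsto a$ and the function $(a,b)\mapsto b^2/a$. The latter is the perspective function of the convex map $t\mapsto t^2$ and is therefore convex on $(0,\infty)\times\R$; this gives convexity of $\psi$ at once. Alternatively, a direct computation shows that the Hessian of $\psi$ equals $(2/a)\,vv^{\tp{T}}$ with $v=(-b/a,\,1)^{\tp{T}}$, which is manifestly positive semidefinite. For (ii), computing $\p_a\psi(a,b) = 1 - b^2/a^2$ shows that the derivative vanishes precisely at $a=|b|$, is negative for $0<a<|b|$ and positive for $a>|b|$; hence $a\mapsto\psi(a,b)$ is decreasing on $(0,|b|)$, increasing on $(|b|,+\infty)$, and attains its global minimum at $a=|b|$ with value $2|b|$.

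For (iii), I would introduce the dimensionless parameters $u = a_1/a_2\geq 1$ and $v = b^2/a_2^2\in[0,\lambda^2]$. A direct calculation gives
\[
\frac{\psi(a_2,b)}{\psi(a_1,b)} \;=\; \frac{u(1+v)}{u^2+v} \;=:\; g(u,v).
\]
Since $\p_v g = u(u^2-1)/(u^2+v)^2 \geq 0$ for $u\geq 1$, the supremum is attained at $v=\lambda^2$, and it remains to maximise $u\mapsto u(1+\lambda^2)/(u^2+\lambda^2)$ over $u\geq 1$. The derivative in $u$ is proportional to $\lambda^2-u^2$, so for $\lambda\geq 1$ the unique interior critical point is $u=\lambda$, at which the value equals $(1+\lambda^2)/(2\lambda) = Z(\lambda)$; for $\lambda\in(0,1)$ the function is decreasing on $[1,\infty)$ and its maximum at $u=1$ equals $1$, which is dominated by $Z(\lambda)\geq 1$ (AM--GM). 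Either way, the claimed bound follows, with the extremal configuration $a_1=\lambda a_2$, $|b|=\lambda a_2$ pinpointing where $Z(\lambda)$ arises. No significant obstacle is anticipated; the cleanness of (iii) is entirely due to the dimensionless parametrisation.
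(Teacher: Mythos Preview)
Your proof is correct. For parts \ref{it:cvx} and \ref{it:min} the paper merely says ``readily checked'', so your explicit arguments (via the perspective function or the rank-one Hessian, and via $\partial_a\psi=1-b^2/a^2$) fill in what the paper leaves out; incidentally you have also corrected a typo in the stated monotonicity of \ref{it:min}---the function is decreasing on $(0,|b|)$ and increasing on $(|b|,\infty)$, as you write, consistently with having a \emph{minimum} at $a=|b|$.

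For part \ref{it:zhukovsky} your route genuinely differs from the paper's. The paper argues by cases using \ref{it:min}: if $|b|\le a_2$ then $a_2,a_1$ both lie in the increasing region, so $\psi(a_2,b)\le\psi(a_1,b)\le Z(\lambda)\psi(a_1,b)$; if $a_2<|b|$ then $|b|/\lambda\le a_2<|b|$, and decreasing monotonicity on $(0,|b|)$ together with the identity $\psi(|b|/\lambda,b)=Z(\lambda)\,\psi(|b|,b)$ and the fact that $\psi(|b|,b)$ is the global minimum gives
\[
\psi(a_2,b)\le\psi(|b|/\lambda,b)=Z(\lambda)\,\psi(|b|,b)\le Z(\lambda)\,\psi(a_1,b).
\]
Your direct optimisation of the ratio $\psi(a_2,b)/\psi(a_1,b)$ in the dimensionless variables $u=a_1/a_2$, $v=b^2/a_2^2$ is a little longer computationally but is self-contained (it does not rely on \ref{it:min}) and has the bonus of exhibiting the extremal configuration $a_1=|b|=\lambda a_2$ where $Z(\lambda)$ is actually attained.
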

 \begin{proof} 
The first two properties are readily checked.
To prove \ref{it:zhukovsky}, note that when $|b|\leq a_2$ the conclusion follows from \ref{it:min}, since $1\leq Z(\lambda)$. When $a_2< |b|$ then, by applying \ref{it:min} twice,
\begin{equation*}
\psi(a_2,b)\leq \psi(b/\lambda,b)=Z(\lambda) \psi(b,b) \leq Z(\lambda)\psi(a_1,b).\qedhere
\end{equation*}
\end{proof}

\begin{proof}[Completion of the proof of Proposition \ref{prop:energyquasiminim}]
We first deal with the case $p=1$. Note that $\phi_1$ is continuous and denote also by $u$ the precise representative of the class $[u]\in W^{1,2}_\tp{loc}$. Consider the set of ``good'' radii
$$\mc G \equiv \biggr\{r\in (0,\infty): 
\begin{array}{l}
 u|_{\mb S_r} \tp{ is absolutely continuous, (\ref{eq:lambdadata}) and \eqref{eq:isoperimetric} hold,} \\
\tp{and }\J u(x) = f(x) \tp{ for } \mathscr H^1\tp{-a.e. } x\in \mb S_r
\end{array} \biggr\}.$$
Since $u$ is a Sobolev function, our hypotheses together with an application of Fubini's theorem show that the $\mc G$ has full measure, i.e.\ $\mathscr L^1(\R^+\exc \mc G)=0$. 

Fix $r\in \mc G$. The crucial observation is that $\phi_1$ satisfies the isoperimetric inequality \eqref{eq:isoperimetric} with equality: this is easily checked, but it can also be seen as a consequence of the fact that $\phi_1$ maps circles to circles and has degree one. Hence, as $u$ satisfies \eqref{eq:isoperimetric} by assumption, 
$$
\left(\int_{\S_r} |\D \phi_1\cdot
    \nu^\bot|\d\theta\right)^2 = 4 \pi\int_{B_r} f \d x 
    \leq \biggr(\int_{\S_r} |\D u \cdot \nu^\bot|\d\theta\biggr)^2.
$$
Moreover, $\D\phi_1(re^{i \theta})$ is constant on $\mb S_r$ and so, using Jensen's inequality, we arrive at
\begin{equation}
\label{eq:4}
   |\D \phi_1 \cdot \nu^\bot|^2 = \left(\fint_{\S_r} |\D \phi_1\cdot
    \nu^\bot|\d\theta\right)^2
    \leq \fint_{\S_r} |\D u \cdot \nu^\bot|^2\d\theta.
\end{equation}
Here we implicitly assume that $\D\phi_1$ and $\D u$ are evaluated at the point $x=r e^{i\theta}$, in order to lighten the notation; the same convention is used in the rest of the proof.

We now note the following cofactor identity: if $\nu\in \mb S^{1}$ and $A\in \R^{2\times 2}$, 
  $$\det A = \det A \langle \nu, \nu \rangle = \langle
  \tp{cof}(A)^\tp{T} A \nu, \nu \rangle = \langle A \nu, \tp{cof}(A)
  \nu\rangle.$$
  Using the Cauchy--Schwarz inequality and the fact that $|\tp{cof}(A)\nu|=|A\nu^\bot|$,
we have
  \begin{equation}
  \det A \leq |A\nu| |\tp{cof}(A) \nu | \hs \implies\hs
  |A\nu^\bot|^2 +\frac{(\det A)^2}{|A\nu^\bot|^2}\leq |A\nu^\bot|^2+
  |A\nu|^2= |A|^2.\label{eq:CS}
\end{equation}
We apply (\ref{eq:CS}) to $A=\D u(x)$, choosing $\nu=x/r$: since $\J u=f$,
  \begin{align}
  \label{eq:phi1}
  \fint_{\S_r}\psi(|\D u\cdot \nu^\bot|^2, f(r))\d\theta
 	= \fint_{\S_r} |\D u \cdot\nu^\bot|^2 + 
 	\frac{ f^2}{ |\D u \cdot\nu^\bot|^2} \d\theta\leq   \fint_{\S_r} |\D u|^2\d\theta,
  \end{align}
  where $\psi$ is as in Lemma \ref{lemma:psi}.
    By Lemma \ref{lemma:psi}\ref{it:cvx},  Jensen's inequality applies to yield 
  \begin{equation}
\label{eq:phi2}
\psi\left(\fint_{\S_r} |\D u\cdot\nu^\bot|^2\d\theta, f(r)\right)=
 \psi\left(\fint_{\S_r} |\D u\cdot\nu^\bot|^2\d\theta, \fint_{\S_r} f(r)\d\theta\right)\leq
 \fint_{\S_r} \psi(|\D u\cdot \nu^\bot|^2,f(r)) \d\theta.
 \end{equation}
Let us also note that we have equality in (\ref{eq:phi1}) whenever we have equality in (\ref{eq:CS}), i.e.\ whenever we have equality in Cauchy--Schwarz. In other words, we have equality in (\ref{eq:phi1}) if and only if $\tp{cof}(\D u)\nu $ is parallel to $\D u\cdot\nu$ (or, equivalently, if and only if $\p_r u \bot \p_\theta u$), which is the case if $u$ is a radial stretching\footnote{Although this is not important for our purposes, it is also the case if $u$ is conformal.}, see also the discussion in the Introduction.

We now take 
 $$a_1 =\fint_{\mb S_r}|\D u\cdot \nu^\bot|^2\d \theta, \qquad 
 a_2=|\D \phi_1\cdot \nu^\bot|^2=\frac{\rho^2(r)}{r^2} ,\qquad 
 b=f(r)=\frac{\rho(r) \dot \rho(r)}{r}.$$ From (\ref{eq:4}) we have that $a_2\leq a_1$ and from (\ref{eq:lambdaradial}) we have $|b|\leq \lambda a_2$.
 Hence Lemma \ref{lemma:psi}\ref{it:zhukovsky}, combined with (\ref{eq:phi1}) and (\ref{eq:phi2}), gives
 \begin{align*}
\psi\left(|\D \phi_1\cdot\nu^\bot|^2, f(r)\right)
    \leq Z(\lambda)\, \psi\left(\fint_{\S_r} |\D u\cdot\nu^\bot|^2\d\theta, f(r)\right)
   \leq Z(\lambda) \fint_{\S_r} |\D u|^2\d\theta.
     \end{align*}
As noted above, $\phi_1$ satisfies \eqref{eq:phi1} with equality and so 
$$\fint_{\S_r} |\D \phi_1|^2\d \theta = |\D \phi_1|^2= \psi\left(|\D \phi_1\cdot\nu^\bot|^2, f(r)\right).$$
This proves \eqref{eq:minimising} when $p=1$.

The case $p>1$ follows from the case $p=1$: since $x\mapsto x^{2p}$ is a strictly convex, increasing function over $\R^+$,  we can apply Jensen's inequality to conclude that
$$\fint_{\mb S_r} |\D \phi_1|^{2p}\d\theta=
\left(\fint_{\mb S_r}|\D \phi_1|^2 \d\theta\right)^{p} \leq 
\left(\fint_{\mb S_r}|\D u|^2\d\theta\right)^{p}\leq \fint_{\mb S_r} |\D u|^{2p}\d\theta,$$
where we also used the fact that $\D \phi_1$ is constant in $\mb S_r$ in the first equality.

Finally, \ref{it:uniqueness} follows by inspection of the proof. Since $\psi(a_2,b)<\psi(a_1,b)$ if $b\leq a_2<a_1$, to have equality in \eqref{eq:minimising} we must have $a_2=a_1$, that is, we must also have equality in \eqref{eq:isoperimetric}.
 \end{proof}

\begin{remark}
The dependence on $\lambda$ in the estimate \eqref{eq:improvedintegrability} is not uniform. That this must be the case is easily seen by considering regularised versions of Example \ref{ex:annulus}, see also Section \ref{sec:counterexample}. 
\end{remark} 

We next show that \eqref{eq:isoperimetric} holds under natural assumptions.
\begin{prop}\label{prop:isoperimetric}
Fix $p\in[1,\infty)$ and $R>0$. Let $u\in W^{1,2p}(B_R(0),\R^2)$ be a continuous map such that $\J u =f$ a.e.\ in $B_R(0)$. Suppose furthermore that for a.e.\ $r\in(0,R)$ the change of variables formula
\begin{equation}
\label{eq:changeofvars}
\int_{B_r} \J u \d x = \int_{\R^2} \tp{deg}(y,u,B_r) \d y,
\end{equation}
holds.
Then \eqref{eq:isoperimetric} holds for a.e.\ $r\in (0,R)$. Moreover,  equality holds in \eqref{eq:isoperimetric} if and only if $u(\mb S_r)$ is a circle which is traversed one time.
\end{prop}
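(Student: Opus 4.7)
The plan is to recognise the stated inequality as the classical \emph{Hurwitz isoperimetric inequality} applied to the image curve $\Gamma_r:=u(\mb S_r)$.

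Since $u\in W^{1,2p}(B_R,\R^2)\subset W^{1,2}(B_R,\R^2)$, the ACL characterisation of Sobolev maps together with Fubini's theorem ensures that, for a.e.\ $r\in(0,R)$, the trace $\gamma_r\equiv u|_{\mb S_r}$ is absolutely continuous and belongs to $H^1(\mb S_r,\R^2)$. Hence $\Gamma_r$ is a closed rectifiable curve whose length equals $L(\Gamma_r)=\int_{\mb S_r}|\D u\cdot \nu^\bot|\,\d\theta$ by the polar identities of Section \ref{sec:radialdata}. Moreover, since $u$ is continuous on $\overline{B_r}$, for every $y\notin\Gamma_r$ the topological degree $\tp{deg}(y,u,B_r)$ coincides with the winding number $w_{\Gamma_r}(y)$. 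Combining this with the change-of-variables hypothesis \eqref{eq:changeofvars} yields
$$\int_{B_r}\J u\,\d x=\int_{\R^2}w_{\Gamma_r}(y)\,\d y,$$
which is the signed area enclosed by $\Gamma_r$ counted with multiplicity.

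The claim then reduces to Hurwitz's isoperimetric inequality for closed $H^1$-curves: $4\pi A\leq L^2$, with equality iff the curve parametrises a circle exactly once. I would include the standard Fourier proof: after reparametrising $\Gamma_r$ by arclength on $[0,L]$, expand $\gamma(s)=\sum_{n\in\Z}c_n e^{2\pi i n s/L}$; Parseval together with $|\gamma'|=1$ a.e.\ gives $\sum_n n^2|c_n|^2=L^2/(4\pi^2)$, while a direct computation of $A=\tfrac12\int_0^L \gamma\wedge \gamma'\,\d s$ gives $A=\pi\sum_n n|c_n|^2$. The integer bound $n\leq n^2$ then yields $4\pi A\leq L^2$, and equality forces $c_n=0$ for $n\notin\{0,1\}$, i.e.\ $\Gamma_r$ is the circle of radius $|c_1|$ centred at $c_0$ traced exactly once. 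Conversely, when $u(\mb S_r)$ is such a circle, both steps of the argument saturate, verifying the equality clause.

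The main obstacle is the arclength reparametrisation when $\p_\theta u$ vanishes on a set of positive angular measure: such constancy intervals must be collapsed before applying Fourier analysis, which is standard but requires some bookkeeping. Alternatively one may deduce the inequality from the Banchoff--Pohl inequality $L^2\geq 4\pi\int w_\Gamma^2\,\d y$ combined with the integer-valued comparison $w\leq w^2$ (which also gives a transparent equality analysis), thereby bypassing the reparametrisation issue entirely.
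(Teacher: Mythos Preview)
Your proposal is correct. The alternative you sketch at the end---Banchoff--Pohl's inequality $L(\Gamma)^2\geq 4\pi\int_{\R^2} w_\Gamma^2\,\d y$ combined with the pointwise bound $w\leq w^2$ for integer-valued $w$---is exactly the route the paper takes: it cites Federer and Banchoff--Pohl for the generalised isoperimetric inequality, identifies the winding number with $\tp{deg}(y,u,B_r)$, and then passes from $\int\tp{deg}^2$ to $\int\tp{deg}$ using that the degree is an integer. Your primary approach via Hurwitz and Fourier series is a more self-contained variant that establishes the weaker inequality $4\pi\int_{\R^2} w_\Gamma\,\d y\leq L^2$ directly, which is all that is needed here; the price is the arclength-reparametrisation bookkeeping you correctly flag, which the paper avoids by quoting Banchoff--Pohl as a black box. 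The equality analyses also agree: in the paper's argument, equality in Banchoff--Pohl forces $\Gamma_r$ to be a circle traversed some $k$ times and then $w=w^2$ forces $k=1$, which matches your Fourier conclusion that $c_n=0$ for $n\notin\{0,1\}$.
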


In \eqref{eq:changeofvars}, $\tp{deg}(y,u,B_r)$ denotes the \textit{topological degree} of $u$ at $y$ with respect to $B_r$.

\begin{proof}
We first note that due to the Sobolev regularity of $u$, $u(\mb S_r)$ is a continuous rectifiable curve for almost every $r\in (0,R)$ and hence we may restrict to such $r$ without loss of generality.
We now recall the following \textit{generalised isoperimetric inequality}: given a continuous rectifiable curve $\Gamma$, let $(E_k)_k$ be the components of $\R^2\exc \Gamma$; on each $E_k$, $\Gamma$ has a well-defined winding number $w_k$. Then we have
\begin{equation}
\label{eq:generalisedisoperimetric}
4\pi \sum_k w_k^2\, \mathscr L^2(E_k) \leq l(\Gamma)^2,
\end{equation}
with equality if and only if $\Gamma$ is a circle traversed a finite number of times in a given direction. Here $l(\Gamma)$ denotes the length of $\Gamma$.
This inequality was proved implicitly in \cite[page 487]{Federer1960} and then later in \cite{Banchoff1971}, but see also \cite{Osserman1978} for a comprehensive overview.

We want to apply \eqref{eq:generalisedisoperimetric} when $\Gamma\colon \mb S^1\to \R^2$ is the curve $\Gamma(\theta)=u(re^{i \theta})$. Recall that, at a point $y$, the winding number of the curve $\Gamma$ with respect to $y$ is just $\tp{deg}(y,u,B_r)$, see for instance \cite[§6.6]{Deimling1985}. Since $l(\Gamma)= \int_{\mb S_r} |\tp{cof}(\D u)\nu|\d \theta$, we can use \eqref{eq:generalisedisoperimetric} to get
$$
\int_{\R^2} \tp{deg}(y,u,B_r)^2 \d y\leq 
\frac{1}{4\pi} \biggr(\int_{\mb S_r} |\tp{cof}(\D u)\,\nu|\d\theta\biggr)^2.
$$
As the topological degree is an integer, we deduce from \eqref{eq:changeofvars}
that
\begin{equation}
\int_{B_r} \J u\d x\leq \frac{1}{4\pi} \biggr(\int_{\mb S_r} |\tp{cof}(\D u)\,\nu|\d\theta\biggr)^2.
\label{eq:finalestimate}
\end{equation}
This proves \eqref{eq:isoperimetric}, since $|\tp{cof}(A)\nu|=|A\nu^\bot|$ for $A\in \R^{2\times 2}$.

The equality cases follow from the equality cases for \eqref{eq:generalisedisoperimetric} together with the fact that we must have $\tp{deg}(y,u,B_r)=\pm 1$ for $y\in u(B_r)$ to get equality in \eqref{eq:finalestimate}.
\end{proof} 

\begin{remark}\label{rem:changeofvars}
For $p>1$, the continuity assumption in Proposition \ref{prop:isoperimetric} is not restrictive and moreover \eqref{eq:changeofvars} also holds automatically, as maps in a supercritical Sobolev space always satisfy the Lusin (N) property. We refer the reader to \cite{Fonseca1995,Hencl2014a} for further details. 

For $p=1$ it is not in general the case that solutions are continuous and satisfy \eqref{eq:changeofvars}. However, both properties are satisfied over open sets where $f>0$ a.e., as in this case solutions have finite distortion. Assuming a positive answer to Question \ref{question:CLMS}, one can always find solutions satisfying both properties over bounded domains where $f\geq 0$ a.e.\ \cite[Theorem C]{GuerraKochLindberg2020b}.
\end{remark} 
  
We conclude this section by showing how Theorem \ref{bigthm:helein}\ref{it:averagecond} follows from Proposition \ref{prop:energyquasiminim}.
 
\begin{proof}[Proof of Theorem \ref{bigthm:helein}\ref{it:averagecond}]
Fix $p\in [1,\infty)$.
Since \eqref{eq:conditiondata} holds, Proposition \ref{prop:isoperimetric} applies. As $f$ satisfies \eqref{eq:lambdadata} with $\lambda=1$,  we conclude from \eqref{eq:minimising} that $\phi_1$ is a $2p$-energy minimiser.

That $\phi_1$ is the unique $2p$-energy minimiser in $\tp{id}+W^{1,2p}_0(B,B)$ follows from Proposition \ref{prop:energyquasiminim}\ref{it:uniqueness} and the equality case of Proposition \ref{prop:isoperimetric}. Indeed, for any $2p$-energy minimiser $u$ we must have that, for a.e.\ $r\in (0,R)$, $u(\mb S_r)$ is a circle; that is, using Proposition \ref{prop:lifting}, we may write
$$u(r e^{i \theta}) = \psi (r) e^{i \gamma(r,\theta)}.$$
Since $\tp{deg}(y,u,B_r)=1$ for all $y\in u(B_r)$, as $f$ is positive a.e., we see from \eqref{eq:jacobianpolarcoords} that 
$$\psi(r)=\sqrt{\int_0^r 2 s f(s)\d s }=\rho(r).$$
It is now easy to see from \eqref{eq:dirichletenergypolarcoords} that, as $u$ is a minimiser, $\p_r \gamma (r,\theta)= 0$ for a.e.\ $r\in (0,R)$. It follows from the boundary condition $u=\tp{id}$ on $\p B$ that $u=\phi_1$.

Finally we prove \eqref{eq:estimateoverdomains}. Arguing as in the proof of the case $p>1$ of Proposition \ref{prop:energyquasiminim}\ref{it:improvedintegrability}, we have that
$$\int_B |\D \phi_1|^{2p}\d x \lesssim\int_B |M f(x)|^p \d x,$$
for any $p\in [1,\infty)$. Through the maximal inequality this immediately implies \eqref{eq:estimateoverdomains} for $p>1$. To deal with the endpoint $p=1$ we recall that, whenever $\supp f\subset \bar{B}$, then
$$\Vert f \Vert_{L\log L(B)} \approx \Vert Mf\Vert_{L^1(B)},$$
see for instance \cite[page 23]{Stein1970}. Extending $f\in X_p(B)$ by zero outside $B$ we finish the proof.
\end{proof} 

Inspecting the above proof we also readily obtain Corollary \ref{bigcor:uniqueminimisers}.
We also note that the proof of Theorem \ref{bigthm:helein}\ref{it:averagecond} does not use any information about the behaviour of solutions on the boundary of the domain. It would be interesting to know the extent to which the boundary condition impacts the symmetry of energy minimisers. A model problem in this direction is to consider, for $\e>0$, the datum $f_\e(r)\equiv c_\e r^\e$, where $c_\e\equiv \frac{2}{2+\e}$ is such that $\fint_{B_1} f_\e \d x = 1$. It is easy to see that
$$f_\e(r)= \frac{2+\e}{2} \fint_{B_r(0)} f \d x$$
and hence \eqref{eq:minimising} shows that, for $\e\ll 1$, the energy of the corresponding radial stretchings is arbitrarily close to that of any other energy minimiser. However, we do not know whether the corresponding radial stretchings are $2p$-energy minimisers in $\tp{id}+W^{1,2p}_0(B,B)$.
 
\section{Non-symmetric energy minimisers}
\label{sec:counterexample}

In this section we prove part \ref{it:counterex} of Theorem \ref{bigthm:helein}. 
For a point $z=(x,y)\in \R^2$, let us write $|z|_1\equiv|x|+|y|$ for its $\ell^1$-norm and $$Q_r\equiv \{z\in \R^2:|z|_1<r\},\qquad \mb A_1(r,R)\equiv \{z\in \R^2:r<|z|_1<R\}$$
for the corresponding balls and annuli. The following example, although simple, is useful:

\begin{ex}[Mapping a ball onto a square]\label{ex:balltosquare}
The map
$$\eta(x,y)\equiv \frac{r\,\tp{sgn}(x)}{\sqrt 2}\,\begin{cases}
(1,4/\pi\arctan(y/x) & \tp{if } |y|< |x|,\\
(4/\pi \arctan(x/y),1)& \tp{if } |y|\geq |x|,\\
\end{cases}$$
is bi-Lipschitz and satisfies a.e.\ $\det\D \eta= 2/\pi$. For any $r>0$, we also have $R\circ\eta(B_r)=Q_r$, where $R$ is a rotation by angle $\frac \pi 4$.
\end{ex}

\begin{figure}[htbp]
\centering
\includegraphics[width=0.6\textwidth]{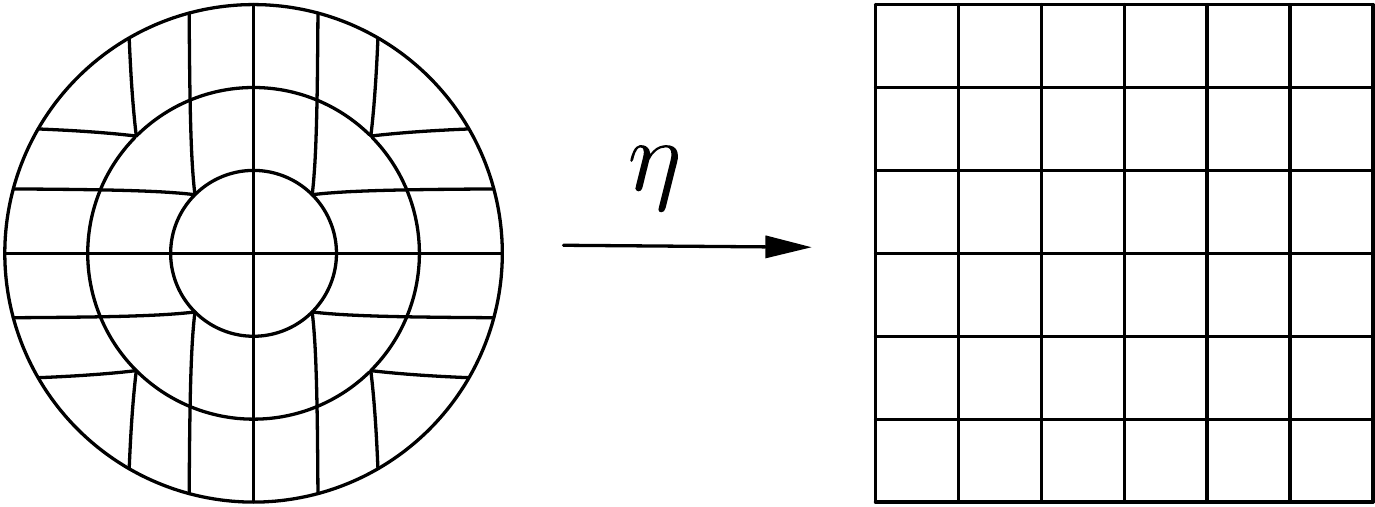}
\label{fig:balltosquare}
\caption{The map from Example \ref{ex:balltosquare}.}
\end{figure}

The map in Example \ref{ex:balltosquare} can be found in \cite{Griepentrog2008}. In fact, Example \ref{ex:balltosquare} is an explicit particular case of a more general construction, due to \textsc{Fonseca}--\textsc{Parry} \cite[Theorem 5.4]{Fonseca1992}. 
Their result applies to all domains of the following class:
\begin{ndef}
\label{def:classC}
A domain $\Omega\subset \R^n$ is of class $\mathscr C$ if there are $\e,\delta>0$ and $N\in \N$ such that:
\begin{enumerate}
\item $B_\e(0)\subset \Omega$ and $\Omega$ is bounded and star-shaped with respect to $0$, that is, every ray starting at 0 intersects $\p \Omega$ exactly once;
\item there is a finite partition $\Omega=\bigcup_{i=1}^N \Omega_i$ such that each $\Omega_i$ is a cone with vertex at 0, $B_\e(0)\cap \Omega_i$ is convex, $\p\Omega_i\cap \p \Omega$ is $C^1$ and satisfies $\nu(x)\cdot x\geq \delta$ for all $x\in \p \Omega_i\cap \p \Omega$, where $\nu$ denotes the outward unit normal.
\end{enumerate}
\end{ndef}

Given two domains $\Omega, \tilde \Omega$ of class $\mathscr C$, as they are star-shaped with respect to 0, there is a unique Lipschitz function $\psi\colon \p \Omega\to (0,+\infty)$ such that $\psi(x) x\in \p \tilde \Omega$ for all $x\in \p \Omega$. The next theorem was proved in \cite{Fonseca1992}, although the statement here is more precise than theirs:

\begin{thm}\label{thm:FonsecaParry}
Let $\Omega,\tilde \Omega$ be two domains of class $\mathscr C$. Then there is a surjective map $v\colon \Omega\to \tilde \Omega$ 
which is $L$-bi-Lipschitz, i.e.
$$\frac{1}{L} |x-y|\leq |v(x)-v(y)|\leq L |x-y| \quad \tp{ for all } x,y\in \overline \Omega,$$
and which solves, for $\psi$ as above,
$$
\begin{cases}
\J v = |\tilde \Omega|/|\Omega| & \text{in } \Omega,\\
v(x)=\psi(x) x & \text{for } x\in \p \Omega.
\end{cases}
$$
 Moreover, $L>0$ is a constant which depends only on $\delta,\e,n,N,\tp{diam}(\Omega)$ and $\tp{diam}(\tilde \Omega)$.
\end{thm}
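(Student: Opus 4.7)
My plan is a two-stage reduction that tracks the dependence of $L$ on the data explicitly: first produce a bi-Lipschitz map with the correct boundary values but the wrong Jacobian, then correct the Jacobian by composition with a self-map of $\tilde\Omega$ that fixes the boundary. Existence per se is essentially due to Fonseca--Parry \cite{Fonseca1992}; the novelty here is the explicit control of $L$. For the first stage, after taking a common refinement of the two partitions (which only increases $N$ while preserving the other constants in Definition~\ref{def:classC}), I may assume $\Omega$ and $\tilde\Omega$ share a common angular partition $\{S_i\}_{i=1}^N$ of $\mathbb{S}^{n-1}$. Let $r,\tilde r\colon\mathbb{S}^{n-1}\to(0,\infty)$ be the radial functions of $\partial\Omega$ and $\partial\tilde\Omega$. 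The cone bound $\nu\cdot x\geq\delta$ yields Lipschitz control of $r|_{S_i}$ and $\tilde r|_{S_i}$ depending only on $\delta$ and $\e$, together with the pointwise bounds $\e\leq r,\tilde r\leq \max\{\tp{diam}(\Omega),\tp{diam}(\tilde\Omega)\}$. The piecewise radial rescaling
\begin{equation*}
v^{(1)}(s\omega)\equiv\frac{\tilde r(\omega)}{r(\omega)}\,s\,\omega,\qquad \omega\in S_i,\ s\in[0,r(\omega)],
\end{equation*}
is then globally continuous and bi-Lipschitz with constant depending only on the stated data (Lipschitz continuity across cone interfaces follows by splitting any segment at the interface point), maps $\Omega$ onto $\tilde\Omega$, and satisfies $v^{(1)}(x)=\psi(x)x$ on $\partial\Omega$ by construction. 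Its Jacobian $h\equiv\det Dv^{(1)}$ is piecewise Lipschitz and bounded above and below by admissible constants.

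For the second stage, set $c\equiv|\tilde\Omega|/|\Omega|$ and $g\equiv c/(h\circ(v^{(1)})^{-1})$, so $g>0$ is piecewise Lipschitz, bounded, and satisfies $\int_{\tilde\Omega}g=|\tilde\Omega|$. I would produce a bi-Lipschitz self-map $w\colon\tilde\Omega\to\tilde\Omega$ with $w=\tp{id}$ on $\partial\tilde\Omega$ and $\det Dw=g$ via Moser's flow method \cite{Moser1965}: solve $\tp{div}(X)=g-1$ on $\tilde\Omega$ with $X\cdot\nu=0$ on $\partial\tilde\Omega$, and let $w$ be the time-$1$ map of the flow of the renormalised vector field $X/(1+t(g-1))$. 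Setting $v\equiv w\circ v^{(1)}$ then yields a map satisfying all three requirements, with bi-Lipschitz constant controlled by those of $v^{(1)}$ and $w$.

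The hard part is producing $X$ with Lipschitz norm depending only on $\delta,\e,n,N,\tp{diam}(\Omega),\tp{diam}(\tilde\Omega)$; standard elliptic estimates would introduce extraneous dependencies on the domain, so $X$ must be constructed explicitly from the star-shaped structure. On each $\tilde\Omega_i$, a one-dimensional primitive of $g-1$ along radial rays supplies a candidate for which $X\cdot\nu=0$ on $\partial\tilde\Omega_i\cap\partial\tilde\Omega$ is automatic by star-shapedness and the condition $\nu\cdot x\geq\delta$; the pieces are then glued across cone interfaces by a partition of unity together with a tangential correction chosen to enforce continuity and preserve the divergence identity. The resulting Lipschitz norm of $X$, and hence the bi-Lipschitz constant of $w$, tracks explicitly through $\tp{Lip}(g)$ and the geometric parameters, yielding the claimed quantitative bound on $L$.
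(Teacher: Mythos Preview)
The paper does not supply its own proof of this theorem: it is quoted from \cite[Theorem~5.4]{Fonseca1992} and used as a black box, with the remark that the quantitative dependence of $L$ can be read off from that construction. So there is no proof in the paper to compare against.

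Your two-stage strategy is the natural one, but the second stage has a genuine gap. After the radial rescaling $v^{(1)}$ one computes $h=\det Dv^{(1)}=(\tilde r/r)^n$, a function of $\omega=x/|x|$ alone; consequently $g=c/(h\circ(v^{(1)})^{-1})$ is also constant along rays. Your claim that a one-dimensional radial primitive of $g-1$ gives $X\cdot\nu=0$ on $\partial\tilde\Omega_i\cap\partial\tilde\Omega$ ``automatically by star-shapedness and the condition $\nu\cdot x\geq\delta$'' is backwards. Integrating from the origin yields $X(s\omega)=\tfrac{s}{n}(g(\omega)-1)\,\omega$, which is Lipschitz but has $X\cdot\nu=\tfrac{\tilde r(\omega)}{n}(g(\omega)-1)(\omega\cdot\nu)$ on the boundary; the hypothesis $\nu\cdot x\geq\delta$ says precisely that $\omega\cdot\nu>0$, so this vanishes only where $g=1$. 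Integrating from the boundary instead gives $X=0$ on $\partial\tilde\Omega$ but a field blowing up like $s^{1-n}$ at the origin. The ``tangential correction chosen to enforce continuity and preserve the divergence identity'' is exactly where the difficulty sits, and your sketch gives no mechanism for producing it with Lipschitz norm depending only on the listed parameters.

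There is a second, related issue: $g$ itself is not Lipschitz on $\tilde\Omega$ (its gradient scales like $|x|^{-1}$ near the origin), so once $X$ is modified away from the special form $\phi(\omega)x$, the Moser field $X/(1+t(g-1))$ need not be Lipschitz either, and the flow may fail to be bi-Lipschitz. Your final sentence assumes control through $\tp{Lip}(g)$, which is infinite. These two points together mean the correction step, as written, does not go through.
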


Our goal is to use Theorem \ref{thm:FonsecaParry} to prove Theorem \ref{bigthm:helein}\ref{it:counterex}. If we do not require $f$ to be bounded away from zero, the following yields a simple example:
\begin{ex} Let
$f=\frac 4 3 1_{\mb A(1,2)}$ and
note that the radial stretching $\phi_1$ solving $\J \phi_1=f$ is not in $W^{1,2}(B_{1+\delta}(0))$, for any $\delta>0$, c.f.\ \eqref{eq:radialblowsup}. Actually, it is a general fact that $W^{1,2}$ solutions of \eqref{eq:jac} cannot be constant in open sets where $f=0$, for otherwise they would have integrable distortion and hence would be open mappings. 

We can apply Theorem \ref{thm:FonsecaParry} to the domains
$$\Omega= \mb A(1,2)\cap \{x>0,y>0\},\qquad \tilde \Omega= B_2(0) \cap \{x>0,y>0\},$$
which are star-shaped with respect to $(1,1)$,
to find a bi-Lipschitz map $u\colon \Omega\to \tilde \Omega$ which has constant Jacobian in $\Omega$. One can then extend $u$ to $B_1(0)\cap\{x>0,y>0\}$ in a trivial way, using the boundary data on the arc $\mb S_1\cap \{x>0,y>0\}$, and then extend $u$ to $B_2(0)$ through reflections along the axes, i.e.\ by setting
\begin{equation}
\label{eq:reflections}
u(x,y)=\begin{cases}
(u^1(x,-y), -u^2(x,-y)) & \tp{if } x>0, y<0,\\
(-u^1(-x,y), u^2(-x,y)) & \tp{if } x<0, y>0,\\
(-u^1(-x,-y), -u^2(-x,-y)) & \tp{if } x<0, y<0,
\end{cases}
\end{equation}
see Figure \ref{fig:epsilonzero}. Hence there is a Lipschitz solution $u\colon B_2(0)\to B_2(0)$ of \eqref{eq:jacdirichlet}.
\end{ex}

\begin{figure}[htbp]
\centering
\includegraphics[width=0.9\textwidth]{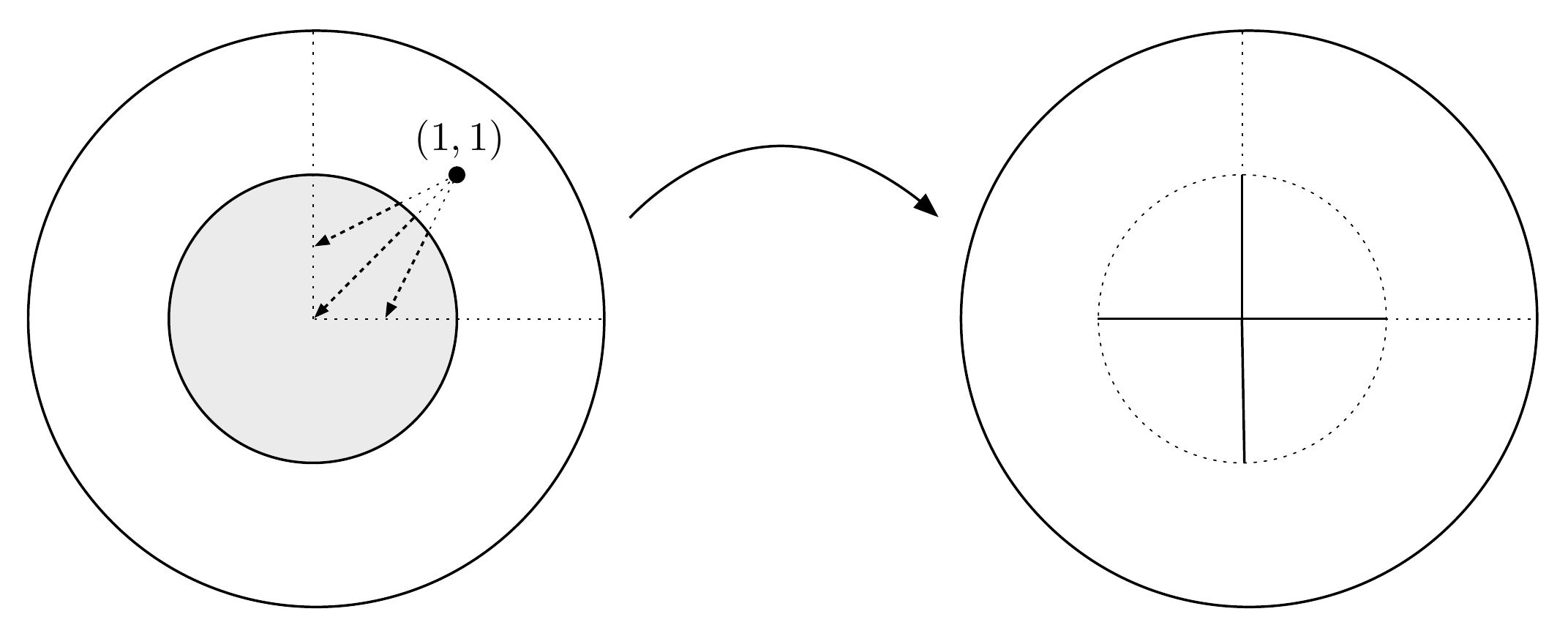}
\caption{A Lipschitz map  which is the identity on $\p B_2$ and which has $\frac 4 3 1_{\mb A(1,2)}$ as Jacobian. It maps $\mb S_1\cap\{x>0,y>0\}$ onto $\{0\}\times [0,1] \cup [0,1]\times\{0\}$ according to the dotted arrows.}
\label{fig:epsilonzero}
\end{figure}

In order to find an example where $f$ is bounded away from zero we need a substantially more intricate construction. Our goal is to prove the following result:

\begin{thm}
\label{thm:counterex}
For $\e\in [0,1]$, consider the family of data
\begin{equation}
\label{eq:deffepsilon}
f_\e\equiv \e 1_{B_1(0)} + 1_{\mb A(1,2)} + \frac{6-\e}{5} 1_{\mb A(2,3)}.
\end{equation}
There is a Lipschitz map $u_\e\colon B_3(0)\to B_3(0)$ such that
\begin{equation}
\label{eq:dirichletfepsilon}
\begin{cases}
\J u_\e = f_\e &  \text{in } B_3(0),\\
\J u_\e = \tp{id} &\text{on } \mb S_3,
\end{cases}
\end{equation}
and moreover there is a constant $C$, independent of $\e$, such that
\begin{equation}
\Vert \D u_\e \Vert_\infty \leq C.
\label{eq:lipschitzbound}
\end{equation}
\end{thm}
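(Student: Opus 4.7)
The plan is to exploit the reflection symmetry of $f_\e$ in the coordinate axes. We construct $u_\e$ first on the first-quadrant sector $B_3(0)\cap Q_+$, where $Q_+\equiv \{(x,y):x,y>0\}$, in such a way that $u_\e=\tp{id}$ on $\mb S_3\cap Q_+$, that $u_\e$ maps each coordinate-axis segment into that same axis, and that $\J u_\e=f_\e$. Extending $u_\e$ across the axes via the formula \eqref{eq:reflections} will then produce a globally Lipschitz map on $B_3(0)$ satisfying \eqref{eq:dirichletfepsilon}, with Lipschitz constant equal to that of the quadrant piece.

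Within $Q_+$ we partition $B_3(0)\cap Q_+$ into three regions $A_\e,B_\e,C_\e$ of areas $\e\pi/4$, $3\pi/4$, $(6-\e)\pi/4$, and let $u_\e$ send $B_1(0)\cap Q_+\to A_\e$, $\mb A(1,2)\cap Q_+\to B_\e$, $\mb A(2,3)\cap Q_+\to C_\e$. The outer region we take as $C_\e\equiv \mb A(\sqrt{3+\e},3)\cap Q_+$, so that on $\mb A(2,3)$ we can define $u_\e$ as the explicit radial stretching $z\mapsto \sigma_\e(|z|)\,z/|z|$ with $\sigma_\e(r)\equiv \sqrt{(6-\e)r^2/5+9(\e-1)/5}$; this map has constant Jacobian $(6-\e)/5$, equals $\tp{id}$ on $\mb S_3$, and is uniformly Lipschitz in $\e\in[0,1]$. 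For the innermost region we fix once and for all an L-shaped domain $A_1\subset Q_+$ of class $\mathscr{C}$, of area $\pi/4$, adjacent to both coordinate axes, and set $A_\e\equiv \sqrt\e\,A_1$; on $B_1(0)\cap Q_+$ we define $u_\e\equiv \sqrt\e\,u_1$, where $u_1\colon B_1(0)\cap Q_+\to A_1$ is a fixed bi-Lipschitz map of constant Jacobian $1$ supplied by Theorem \ref{thm:FonsecaParry}. Then $\J u_\e=\e$ on $B_1$ and $\Vert\D u_\e\Vert_\infty\leq \sqrt\e\,\tp{Lip}(u_1)$, which is uniform in $\e$.

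The main remaining step is to construct $u_\e$ on the middle annulus $\mb A(1,2)\cap Q_+$, with values in the region $B_\e\equiv [B_{\sqrt{3+\e}}(0)\cap Q_+]\setminus A_\e$ of area $3\pi/4$, constant Jacobian $1$, prescribed boundary data on $\mb S_1\cap Q_+$ (matching $\sqrt\e\,u_1$) and on $\mb S_2\cap Q_+$ (matching the radial map $z\mapsto\sqrt{3+\e}\,z/2$), sending axis segments into the axes, and with Lipschitz norm uniform in $\e\in[0,1]$. The difficulty, and main obstacle of the proof, lies precisely in this uniformity: as $\e\to 0$ the target $B_\e$ develops an L-shaped ``hole'' of thickness $O(\sqrt\e)$ at the origin, so the geometric constants in Theorem \ref{thm:FonsecaParry} degenerate and a single direct application does not suffice. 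The remedy is to partition $\mb A(1,2)\cap Q_+$ into finitely many subdomains of class $\mathscr{C}$, chosen so that the corresponding target pieces in $B_\e$ have parameters $\delta,\e,N,\tp{diam}$ bounded uniformly in $\e$. Concretely, one separates off two thin strips along the coordinate axes that are mapped onto the long thin arms of $B_\e$ adjacent to $A_\e$, using a scaling-of-thickness argument analogous to that used for $A_\e$, while the remaining bulk piece of $\mb A(1,2)\cap Q_+$ is mapped by Theorem \ref{thm:FonsecaParry} onto the central portion of $B_\e$, which stays uniformly bounded away from the origin. Gluing the three pieces across $\mb S_1\cap Q_+$ and $\mb S_2\cap Q_+$, and reflecting via \eqref{eq:reflections}, produces $u_\e$ with the required properties.
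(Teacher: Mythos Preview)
Your overall architecture (quadrant construction plus axial reflections) is reasonable, and the outer radial stretching on $\mb A(2,3)$ as well as the $\sqrt{\e}$-scaling on $B_1$ are fine. The genuine gap is in the middle piece, and it is not just a matter of degenerating geometric constants.

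The problem is that Theorem~\ref{thm:FonsecaParry} does \emph{not} let you prescribe boundary data: it hands you the specific map $x\mapsto\psi(x)x$ on $\p\Omega$, i.e.\ the radial projection from the chosen star-centre. Your construction, however, requires very particular boundary values on $\mb S_1\cap Q_+$ (namely $\sqrt{\e}\,u_1$, itself a Fonseca--Parry map from a \emph{different} centre) and on $\mb S_2\cap Q_+$ (the radial map $z\mapsto\tfrac{\sqrt{3+\e}}{2}z$), and moreover the pieces of your proposed partition of $\mb A(1,2)\cap Q_+$ must agree along their common interfaces. There is no reason for the Fonseca--Parry boundary maps on adjacent pieces to match, so the glued map need not even be continuous, let alone Lipschitz. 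A related issue is axis preservation: the reflection formula \eqref{eq:reflections} only yields a continuous map if each piece sends the coordinate axes into themselves, but a Fonseca--Parry map does this only when the star-centre is at the origin, which is impossible since Definition~\ref{def:classC} requires the centre to be interior to $\Omega$, and $0\in\p(B_1\cap Q_+)$.

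The paper resolves both issues by a different route. It first passes to $\ell^1$ geometry via the bi-Lipschitz, constant-Jacobian map $\eta$ of Example~\ref{ex:balltosquare}, so that the inner and middle regions become the square $Q_2$ on which the explicit piecewise-affine map \eqref{eq:Iwaniecsmap} has the correct Jacobian and trivially respects the axes. For the outer $\ell^1$-wedge $\Lambda$, the paper does not apply Fonseca--Parry directly to $\Lambda\to A_\e$; instead it writes down an explicit bi-Lipschitz map $\tau_\e$ with the \emph{correct boundary data} but wrong Jacobian, and then corrects the Jacobian by a Dacorogna--Moser step (composed through a Fonseca--Parry chart onto a ball), which preserves the boundary values. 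It is precisely this Dacorogna--Moser correction, absent from your sketch, that allows one to glue while keeping the prescribed boundary behaviour and uniform Lipschitz bounds.
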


Let us just note that, once Theorem \ref{thm:counterex} is proved, the proof of Theorem \ref{bigthm:helein} is easily finished:

\begin{proof}[Proof of Theorem \ref{bigthm:helein}\ref{it:counterex}]
 Note that $f_\e\geq \e$ and that $\fint_{B_3(0)} f_\e \d x=1$, so that indeed $f_\e$ satisfies \eqref{eq:conditiondata}. Let $\phi_{\e}$ be the unique radial stretching solving \eqref{eq:dirichletfepsilon}, where $f_\e$ is as in \eqref{eq:deffepsilon}. Explicitly, $\phi_\e(z)=\rho_\e(r)\frac{z}{r}$ where, for $r\in (1,2)$,
\begin{equation}
\rho_\e (r) = \sqrt{r^2-1+\e} \quad \implies \quad |\rho'_\e(r)|^2 =\frac{r^2}{r^2-1+\e}.
\label{eq:radialblowsup}
\end{equation}
Using Lemma \ref{lemma:ball} we see that, as $\e\searrow 0$,
$$(9\pi)^{\frac{p-1}{2p}}\Vert \D\phi_\e \Vert_{L^{2p}(B_3)} \geq \Vert \D\phi_\e \Vert_{L^{2}(B_3)}\nearrow +\infty,$$
for any $p\in [1,\infty)$. Moreover, by \eqref{eq:lipschitzbound}, the maps $u_\e$ satisfy
$$\Vert \D u_\e \Vert_{L^{2p}(B_3)}\lesssim 1,$$
uniformly in $\e$ and $p$. This completes the proof.
\end{proof}
It thus remains to prove Theorem \ref{thm:counterex}. We begin by constructing an auxiliary map.
\begin{lemma}[Mapping a wedge onto an `A']\label{lem:wedgeA}
For $\e\in [0,1]$, consider the sets $$\Lambda\equiv \mb A_1(2,3)\cap\{y>0\},\qquad
A_\e\equiv \Lambda\cup\{1+\e(1-|x|)<y\leq2-|x|\}.$$ 
Let us write $\p \Lambda=\Gamma_1\cup \Gamma_2$, where $\Gamma_1\equiv \p \Lambda\exc A_\e$, and consider boundary data
$$\gamma_\e(x,y)= \begin{cases}
(x,y) & \text{on } \Gamma_1,\\
(x,1+\e(y-1)) & \text{on } \Gamma_2.
\end{cases}$$
There is a surjective Lipschitz map
$w_\e \colon \Lambda\to A_\e$, with $\Vert \D w_\e\Vert_\infty\leq C$, and such that
$$
\begin{cases}
\J w_\e = \frac{6-\e}{5} & \text{in } \Lambda,\\
w_\e=\gamma_\e &\text{on } \p \Lambda.
\end{cases}
$$
\end{lemma}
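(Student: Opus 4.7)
The strategy is to combine Theorem \ref{thm:FonsecaParry} with an explicit boundary correction. Both $\Lambda$ and $A_\e$ are of class $\mathscr C$ in the sense of Definition \ref{def:classC} with respect to the common centre $c=(0,5/2)$, with structural constants uniform in $\e\in[0,1]$: every ray from $c$ meets $\p\Lambda$ and $\p A_\e$ exactly once, and a direct computation gives the uniform bound $\nu(x)\cdot(x-c)\geq 1/(2\sqrt 2)$ on each smooth boundary piece. After translating by $-c$, Theorem \ref{thm:FonsecaParry} yields a bi-Lipschitz bijection $v_\e\colon\Lambda\to A_\e$ with $\J v_\e\equiv(6-\e)/5$ and bi-Lipschitz constant uniform in $\e$. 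Its boundary values are radial from $c$: for $x\in\Gamma_1\subset\p\Lambda\cap\p A_\e$ the ray through $x$ exits $A_\e$ at $x$ itself, so $v_\e=\tp{id}$ on $\Gamma_1$; for $x\in\Gamma_2$ the ray continues through the filler and exits on the filler's bottom $\tilde\Gamma_2\equiv\{(x,1+\e(1-|x|)):|x|\leq 1\}$, so $v_\e(\Gamma_2)=\tilde\Gamma_2$ via a bi-Lipschitz parameterisation that does not in general coincide with $\gamma_\e$.

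To correct the boundary data on $\tilde\Gamma_2$ we postcompose with an area-preserving bi-Lipschitz self-map $h_\e\colon A_\e\to A_\e$ satisfying $h_\e=\tp{id}$ on $\p A_\e\setminus\tilde\Gamma_2$ and $h_\e=\gamma_\e\circ v_\e^{-1}$ on $\tilde\Gamma_2$. Since $\gamma_\e$ is a vertical shear of $\Gamma_2$ and $v_\e|_{\Gamma_2}$ is a radial projection from a centre at bounded distance, the reparameterisation $\gamma_\e\circ v_\e^{-1}$ of $\tilde\Gamma_2$ is bi-Lipschitz with constants depending only on the geometry, not on $\e$. To construct $h_\e$ we subdivide $A_\e$ by the two radial segments from $c$ through $(\pm 1,1)$ into three class-$\mathscr C$ sub-domains with uniform structural constants: on the two outer sub-domains $h_\e$ is the identity, while on the central sub-domain adjacent to $\tilde\Gamma_2$ the boundary correspondence is made radial by an explicit shearing, after which Theorem \ref{thm:FonsecaParry} applies again. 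Alternatively one may appeal to a Dacorogna--Moser-type extension for the Jacobian equation with Lipschitz boundary data (cf.\ \cite{Csato2012,Dacorogna1990}). Setting $w_\e\equiv h_\e\circ v_\e$ produces the required map, with $\J w_\e\equiv(6-\e)/5$, the prescribed boundary data, and a Lipschitz bound $\|\D w_\e\|_\infty\leq C$.

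The main obstacle is securing the uniform-in-$\e$ bi-Lipschitz estimate for $h_\e$, which reduces to checking that the sub-domain geometry and the reparameterisation $\gamma_\e\circ v_\e^{-1}$ behave uniformly as $\e$ varies in $[0,1]$. The former follows because the radial segments from $c$ to $(\pm 1,1)$ are fixed in $\e$ and the class-$\mathscr C$ constants vary continuously; the latter from the uniformity of $\gamma_\e$ and of the Fonseca--Parry-induced parameterisation of $\tilde\Gamma_2$ already noted above. A particular point of care is the degenerate limit $\e\to 1$, where $\tilde\Gamma_2$ collapses onto $\Gamma_2$ and $\gamma_\e$ becomes the identity: here $v_\e$ also tends to the identity on $\Gamma_2$ at a comparable rate, so $h_\e$ converges to the identity uniformly and no blow-up occurs.
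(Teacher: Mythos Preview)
Your overall plan---use Theorem \ref{thm:FonsecaParry} to get the constant Jacobian and then repair the boundary data---is reasonable, and your verification that $\Lambda$ and $A_\e$ are of class $\mathscr C$ with respect to $c=(0,5/2)$, together with $v_\e=\tp{id}$ on $\Gamma_1$, is correct. The genuine gap is the construction of the area-preserving corrector $h_\e\colon A_\e\to A_\e$ with the prescribed (non-radial) boundary values. Neither of your two suggestions actually delivers this map. Theorem \ref{thm:FonsecaParry} only produces maps whose boundary trace is \emph{radial} from the chosen centre; so ``make the correspondence radial by an explicit shearing and apply Fonseca--Parry again'' is circular unless you exhibit that shear and check it is area-preserving and matches the required data, which you do not do. The alternative ``Dacorogna--Moser-type extension with Lipschitz boundary data'' runs into exactly the obstruction flagged in the paper: the Dacorogna--Moser theory requires the domain to be at least $C^{3,\alpha}$, and your central sub-domain is merely Lipschitz. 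Your identification of the ``main obstacle'' as the uniform-in-$\e$ estimate is therefore premature; the existence of $h_\e$ itself is the missing step.

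The paper proceeds in the opposite order: first build an \emph{explicit} bi-Lipschitz map $\tau_\e\colon\Lambda^+\to A_\e^+$ (on the half $x>0$) that already has the correct boundary values $\gamma_\e$ but a non-constant Jacobian $\J\tau_\e\in C^{0,1}$ bounded below by $\tfrac12$. The Jacobian is then corrected by conjugating to the unit ball via a Fonseca--Parry map $a_\e\colon A_\e^+\to B_1(0)$ and solving a Dacorogna--Moser problem \emph{on the smooth domain} $B_1(0)$, where the quantitative $C^{1,\alpha}$ estimates of \cite{Riviere1996} apply and give the uniform bound in $\e$. Finally one reflects across $\{x=0\}$. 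In short: get the boundary right by hand, then fix the Jacobian on a ball---not the other way round. If you want to salvage your route, you would need to carry out precisely this conjugation trick to build $h_\e$, at which point the argument collapses to the paper's.
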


\begin{proof}
Take $\Lambda^+\equiv \Lambda\cap \{x>0\}$ and $A_\e^+\equiv A_\e\cap \{x>0\}$. Consider the map 
$\tau_\e\equiv (\tau_\e^1,\tau_\e^2)$ defined for $(x,y)\in \Lambda^+$ by
$$
\tau^1_\e (x,y)\equiv x,\qquad 
\tau^2_\e(x,y)\equiv 
\begin{cases}
 \frac{1}{2} (2 \e(x-1)  (x+y-3)-x^2+3x+y^2-y) & \tp{if } x\in [0,1],\\
\frac{1}{2} (x (2 y-5)+x^2+y^2-3 y +6) & \tp{if } x\in [1,2],\\
\frac{1}{2} y (x+y-1) & \tp{if }  x\in [2,3].
\end{cases}
$$
Since $\J \tau_\e = \p_y \tau^2_\e$, it follows that for $(x,y)\in \Lambda^+$we have
$$
\J \tau_\e (x,y)=
\begin{cases}
\e(x-1) +y-\frac{1}{2}& \tp{if } x\in [0,1],\\
x+y-\frac 3 2 & \tp{if } x\in [1,2],\\
\frac 1 2(x-1)+y & \tp{if }  x\in [2,3].
\end{cases}
$$ 
It is easy to check that $\J \tau_\e \in C^{0,1}(\overline{\Lambda^+})$ and $\J \tau_\e \geq \frac 1 2$ in $\Lambda^+$. 
Note that $\tau_\e\colon \Lambda^+\to A^+_\e$ is a bi-Lipschitz homeomorphism such that 
$$\tau_\e|_{\p \Lambda^+\cap \p \Lambda}=\gamma_\e|_{\p \Lambda^+\cap \p \Lambda}\qquad \tp{ and }
\qquad \tau_\e(\p \Lambda^+\exc \p \Lambda)= \p \Lambda^+\exc \p \Lambda
;$$
in fact, we found $\tau_\e$ by looking for maps with these properties such that $\tau_\e^2$ is a piecewise second order polynomial in $y$. See also Figure \ref{fig:wedges}.

\begin{figure}[htbp]
\centering
\includegraphics[width=0.7\textwidth]{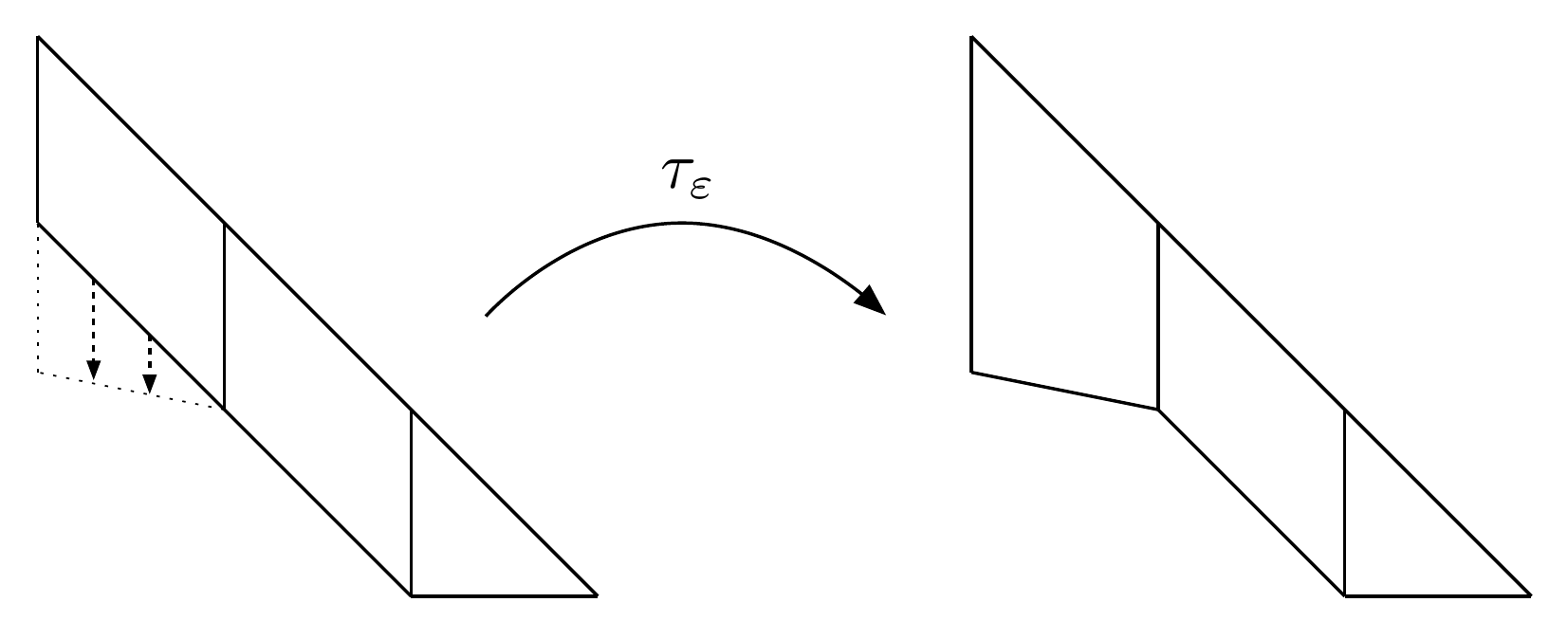}
\captionsetup{width=.8\linewidth}
\caption{The map $\tau_\e$, mapping $\Lambda^+$ onto $A^+_\e$. Apart from the segment with the two dashed arrows, $\tau_\e$ is the identity on $\p \Lambda^+$.}
\label{fig:wedges}
\end{figure}

We now want to apply the Dacorogna--Moser theory to find a map $w_\e\colon \Lambda^+\to A^+_\e$ with constant Jacobian. However, since $A^+_\e$ is just Lipschitz\footnote{The Dacorogna--Moser theory \cite{Dacorogna1990} requires the domain to be at least of class $C^{3,\alpha}$.} this cannot be done directly. Instead, we use Theorem \ref{thm:FonsecaParry} to find a bi-Lipschitz homeomorphism $a_\e\colon A^+_\e\to B_1(0)$ with constant Jacobian (explicitly, $\J a_\e =\frac{2\pi}{6-\e}$), and we take
a solution of
$$
\begin{cases}
\J \sigma_\e = g_\e
&\tp{in } B_1(0),\\
\sigma_\e = \tp{id} &\tp{on } \mb S^1,
\end{cases}
\qquad g_\e \equiv \frac{6-\e}{5} \frac{1}{\J \tau_\e\circ \tau_\e^{-1}\circ a_\e^{-1}}
$$
Note that, by the change of variables formula, and writing $\chi_\e \equiv a_\e\circ \tau_\e$,
\begin{align*}\int_{B_1(0)} g_\e =&\frac{6-\e}{5} \int_{B_1(0)} \frac{\J \chi_\e^{-1}}{\J \tau_\e \circ \chi_\e^{-1} \, \J \chi_\e^{-1}} = \frac{6-\e}{5}\int_{B_1(0)} \frac{\J \chi_\e \circ \chi_\e^{-1}}{\J \tau_\e \circ \chi_\e^{-1}} \J \chi_\e^{-1}\\
 =& \frac{6-\e}{5}\int_{\Lambda^+} \frac{\J \chi_\e}{\J \tau_\e} = \frac{6-\e}{5}\frac{2\pi}{6-\e} |\Lambda^+|=|B_1(0)|,\end{align*}
thus $g_\e$ satisfies the required compatibility condition.
For any $\a\in (0,1)$, we can additionally suppose that
$$
\Vert \sigma_\e -\tp{id}\Vert_{C^{1,\a}} \leq C\left(\a,\Vert g_\e\Vert_{C^{0,1}}\right) \Vert g_\e-1\Vert_{C^{0,\a}}\leq C(\a),
$$
see \cite[Theorem 8]{Riviere1996}. Here the last inequality follows from the fact that the bi-Lipschitz constants of $a_\e, \tau_\e$ are uniformly bounded with $\e\in [0,1]$, since the geometric parameters of $A_\e^+$, according to Definition \ref{def:classC}, are also bounded.
We now take $w_\e \colon \Lambda^+\to A_\e^+$ to be
$$w_\e \equiv a_\e^{-1} \circ \sigma_\e \circ a_\e \circ \tau_\e$$
and then extend $w_\e $ to $\Lambda\exc \Lambda^+$ through a reflection, similarly to \eqref{eq:reflections}. This yields the required map.
\end{proof}

\begin{figure}[htbp]
\centering
\includegraphics[width=0.84\textwidth]{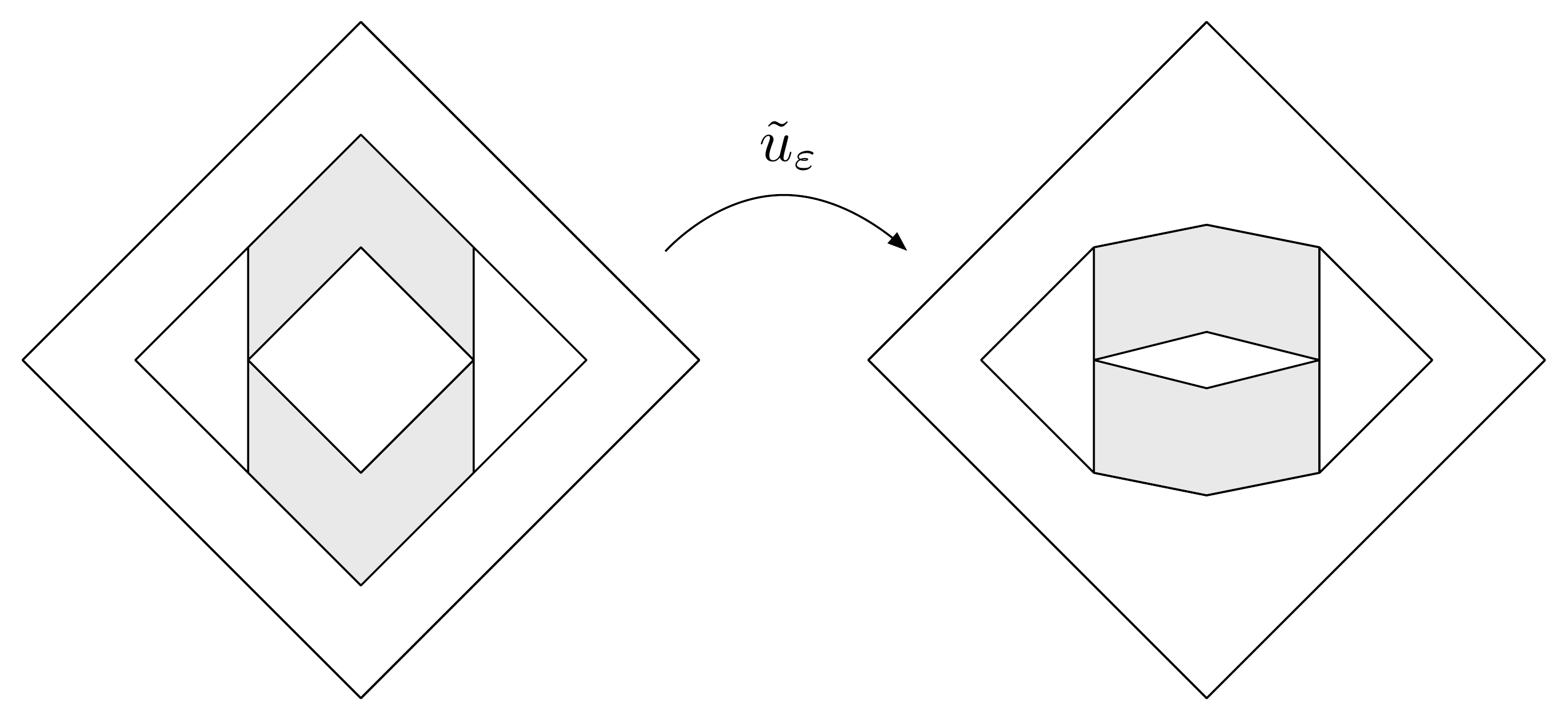}
\caption{The map $\tilde u_\e$ constructed in the proof of Theorem \ref{thm:counterex}.}
\label{fig:FinalMap}
\end{figure}

\begin{proof}[Proof of Theorem \ref{thm:counterex}]
Consider the map
$v_\e$ defined on $Q_2$ by
\begin{equation}
v_\e(x,y)=\begin{cases}
(x,\e y) & \tp{if }  (x,y)\in Q_1,\\
(x,y) & \tp{if }  (x,y)\in \mb A_1(1,2) \tp{ and } |x|>1,\\ 
(x,y-(1-\e)(1-|x|)) & \tp{if } (x,y)\in \mb A_1(1,2) \tp{ and } |x|<1, y>0,\\
(x,y+(1-\e)(1-|x|)) & \tp{if }  (x,y)\in \mb A_1(1,2)\tp{ and } |x|<1, y<0.
\end{cases}
\label{eq:Iwaniecsmap}
\end{equation}
It is easy to check that $\J v_\e=\e 1_{Q_1} + 1_{\mb A_{1}(1,2)}.$ Let $w_\e$ be the map from Lemma \ref{lem:wedgeA} and consider
$$\tilde u_\e \equiv \begin{cases}
v_\e & \tp{in } Q_2,\\
w_\e & \tp{in } \Lambda,\\
\bar w_\e & \tp{in } \bar \Lambda,
\end{cases}
\qquad \tp{where } \bar \Lambda \equiv \{(x,-y): (x,y)\in \Lambda\}
$$
and $\bar w_\e(x,y) \equiv (w_\e^1(x,-y),-w_\e^2(x,-y))$,
see Figure \ref{fig:FinalMap}. Thus
$$\J \tilde u_\e = \e 1_{Q_1}+1_{\mb A_1(1,2)} + \frac{6-\e}{5} 1_{\mb A_1(2,3)}.$$
Recall the map $\eta$ from Example \ref{ex:balltosquare} and let $R$ be a rotation by angle $\frac \pi 4$. Taking
$$u_\e \equiv (R \circ \eta)^{-1} \circ \tilde u_\e \circ (R\circ \eta),$$
the proof is finished.
\end{proof}

\section{Non-uniqueness of energy minimisers}
\label{sec:nonuniqueness}
The goal of this section is to prove Theorem \ref{bigthm:manyminimisers}, which we restate here:

\begin{thm}\label{thm:nonuniqueness}
Fix $1\leq p<\infty$. There is a radially symmetric function $f\in \mathscr H^p(\R^2)$ which has uncountably many $2p$-energy minimisers, modulo rotations.
\end{thm}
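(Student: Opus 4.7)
The plan is to reduce the theorem to the existence of a single non-symmetric $2p$-energy minimiser for a radial $f$, and then to leverage the rotational invariance of $f$ to generate uncountably many minimisers modulo rotations. Let $u^*$ be any $2p$-energy minimiser for such an $f$. Since $\J(u^*\circ R_\theta)(x)=\J u^*(R_\theta x)=f(R_\theta x)=f(x)$ and the energy is invariant under precomposition with rotations, $u^*\circ R_\theta$ is also a minimiser for every $\theta\in \mb S^1$. Consider
$$H\equiv\{\theta\in\mb S^1: u^*\circ R_\theta = R_\phi\circ u^*\text{ for some }\phi\in \mb S^1\}.$$
A routine calculation shows that $H$ is a subgroup of $\mb S^1$, and an $L^p_\tp{loc}$-continuity argument for the rotation action shows $H$ is closed, hence $H$ is either finite cyclic or all of $\mb S^1$. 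If $H=\mb S^1$, the assignment $\theta\mapsto\phi(\theta)$ is a continuous homomorphism, so $\phi(\theta)=k\theta$ for some $k\in\Z$, and inserting $\alpha=0$ into $u^*(R_\theta(re^{i\alpha}))=R_{k\theta} u^*(re^{i\alpha})$ gives $u^*(re^{i\theta})=u^*(r)e^{ik\theta}$. Writing $u^*(r)=s(r)e^{i\beta(r)}$ and using \eqref{eq:jacobianpolarcoords}--\eqref{eq:dirichletenergypolarcoords}, the Jacobian $\J u^* = k s\dot s/r$ is independent of $\beta$ while $|\D u^*|^2=\dot s^2+s^2\dot\beta^2+k^2 s^2/r^2$ is minimised only when $\dot\beta\equiv 0$; minimality thus forces $u^*$ to coincide with a generalised radial stretching $\phi_k$ up to a single rotation. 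If instead $H$ is finite, then $[u^*\circ R_{\theta_1}]=[u^*\circ R_{\theta_2}]$ iff $\theta_1-\theta_2\in H$, so the quotient $\mb S^1/H$ is uncountable and produces uncountably many distinct equivalence classes of minimisers.

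It thus suffices to exhibit a radial $f\in\mathscr H^p(\R^2)$ admitting a minimiser while no generalised radial stretching $\phi_k$ has finite $2p$-energy. For $p>1$, where $\mathscr H^p=L^p$, I would take $f=1_{\mb A(1,2)}$: by \eqref{eq:defrho} the stretchings have $\rho(r)=\sqrt{(r^2-1)/|k|}$ on $(1,2)$, so $|\dot\rho|^{2p}\sim(r-1)^{-p}$ is non-integrable at $r=1$ and Lemma \ref{lemma:ball} shows $\phi_k\notin \dot W^{1,2p}$ for every $k\in\Z\exc\{0\}$. A finite-energy solution is constructed via Theorem \ref{thm:FonsecaParry}, mapping $\mb A(1,2)$ bi-Lipschitzly onto a class-$\mathscr C$ target $\tilde\Omega$ of area $3\pi$ with constant Jacobian, extending by $0$ on $B_1$, and extending to $\R^2\exc B_2$ by a purely angular map $u(r,\theta)=g(\theta)$ with $g$ parameterising $\p\tilde\Omega$; the latter has $|\D u|^{2p}\lesssim r^{-2p}$, integrable against $r\,dr$ precisely when $p>1$. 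Weak compactness in $\dot W^{1,2p}$ together with the weak continuity of $\J$ (see Remark \ref{rem:changeofvars}) then produces a minimiser, and the rotation argument above concludes. For $p=1$, $f$ must have vanishing mean, so I would take $f=c(1_{\mb A(1,2)}-\alpha 1_{\mb A(2,R)})$ with $\alpha=3/(R^2-4)$, which is a scalar multiple of an $\mathscr H^1$-atom supported in $\bar B_R$; the radial stretchings still have a square-root singularity at $r=1$, while a compactly supported finite-energy solution is built by gluing two Fonseca--Parry maps, one orientation-preserving on $\mb A(1,2)$ and one orientation-reversing on $\mb A(2,R)$, setting $u\equiv 0$ outside $B_R$.

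The hardest step is the existence of a minimiser at $p=1$: the Jacobian is not sequentially weakly continuous in $\dot W^{1,2}$ and $\mathscr H^1$-bounded sequences need not be weakly compact in $L^1$, so the direct method does not apply verbatim. I would handle this by working in the admissible class, where the change-of-variables formula of Proposition \ref{prop:isoperimetric} is available, and by invoking the open-mapping framework of \cite{GuerraKochLindberg2020b} to extract a limit from a minimising sequence of uniformly Lipschitz, compactly supported admissible solutions. A secondary subtlety is the well-definedness of $\theta\mapsto\phi(\theta)$ in the first paragraph when the target of $u^*$ has nontrivial rotational symmetries; this ambiguity is removed a posteriori, since such symmetry would force $u^*$ to be a generalised radial stretching by the minimality computation in Step~1.
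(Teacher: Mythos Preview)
Your rotation strategy is the paper's, and your passage from ``one non-symmetric minimiser'' to ``uncountably many'' via the closed-subgroup structure of $H\subset\mb S^1$ is actually cleaner than what the paper does: there $[0,2\pi]$ is partitioned into the closed equivalence classes $X_\alpha$ and Sierpi\'nski's theorem (Theorem~\ref{thm:sierpinski}) is invoked to rule out a nontrivial countable partition, whereas you note that $H=X_0$ is a subgroup, so the dichotomy \emph{finite}/\emph{all of $\mb S^1$} is immediate. Your analysis of the case $H=\mb S^1$ matches Lemma~\ref{lemma:formofminimisers}, and the ``secondary subtlety'' you flag is a non-issue: $R_\psi u^*=u^*$ with $u^*\not\equiv 0$ forces $e^{i\psi}=1$, so $\theta\mapsto\phi(\theta)$ is well-defined.

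The genuine gap is in your construction of a finite-energy solution. The annulus $\mb A(1,2)$ is not star-shaped, hence not of class $\mathscr C$, so Theorem~\ref{thm:FonsecaParry} does not apply to it; and in any case a bi-Lipschitz image of $\mb A(1,2)$ cannot collapse $\mb S_1$ to a point, so extending by $0$ on $B_1$ creates a jump across $\mb S_1$ and the glued map fails to lie even in $W^{1,2}$. The same objection hits your $p=1$ construction. The paper sidesteps all of this by taking a \emph{smooth} compactly supported radial $f$ with $\int_{B_2}f=\int_{\R^2}f=0$, $f<0$ on $(0,1)$ and $f>0$ on $(1,2)$, see \eqref{eq:defdata}; a theorem of Kneuss then yields a $C^1$ solution on $\overline{B_4}$ vanishing on $\mb S_4$, which extends by zero to a map in $W^{1,2p}(\R^2,\R^2)$ for \emph{every} $p\ge 1$ at once. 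The condition $\int_0^2 2sf(s)\,\d s=0$ forces $\rho(r)\sim\sqrt{|2-r|}$ and hence $\dot\rho\notin L^2$ near $r=2$, so no $\phi_k$ has finite energy. Finally, your worry about existence of a minimiser at $p=1$ is milder than you suggest: once a compactly supported $W^{1,2}$ solution is in hand, the Direct Method goes through because the Jacobian is sequentially weakly continuous from $W^{1,2}$ into $\mathscr D'$ on bounded sets (the standard div--curl argument), which is all that is needed when every $\J u_j$ equals the fixed $f$.
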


A more informative statement can be found in Corollary \ref{cor:formofminimisers}, at the end of the section. The proof of Theorem \ref{thm:nonuniqueness} relies mostly on elementary tools and the most sophisticated result that we use is the following:

\begin{thm}[Sierpi\'nski]\label{thm:sierpinski}
Let $(X_n)$ be  disjoint closed sets such that $I=\bigcup_{n\in \mb N} X_n$, where $I=[a,b]\subset \R$. There is at most one $n\in \N$ such that $X_n$ is non-empty.
\end{thm}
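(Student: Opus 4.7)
My plan is to argue by contradiction, reducing the problem to an application of the Baire category theorem on a carefully chosen set of boundaries. Suppose that at least two of the $X_n$'s are nonempty. If only finitely many $X_n$'s were nonempty, I could partition $I$ into $X_{n_1}$ and the union of the remaining (finitely many, closed) nonempty $X_{n_i}$'s, disconnecting $I$; so in fact infinitely many $X_n$'s are nonempty.

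The key auxiliary object is the set $F \equiv \bigcup_{n \in \N} \partial X_n$, where $\partial X_n$ denotes the topological boundary of $X_n$ in $I$. Each $X_n$ is closed, so $\partial X_n \subset X_n$, and the sets $\partial X_n$ are pairwise disjoint. I first claim that $F$ is nonempty and closed in $I$. Nonemptiness uses connectedness: if every $\partial X_n$ were empty, then each $X_n$ would be clopen in $I$ and hence equal to $\emptyset$ or to $I$, contradicting the assumption that several $X_n$'s are nonempty. For closedness, I take $x_k \to x$ with $x_k \in \partial X_{n_k}$ and let $m$ be the unique index with $x \in X_m$; if $x$ were interior to $X_m$ then a neighborhood of $x$ would lie inside $X_m$, forcing $n_k = m$ for all large $k$ by disjointness, so $x$ would be a limit of points of the closed set $\partial X_m$, hence in $\partial X_m$ itself, contradicting that $x$ is interior. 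Therefore $x \in \partial X_m \subset F$.

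Since $F$ is a nonempty closed subset of the compact interval $I$, it is a complete metric space, and the Baire category theorem applies to the countable disjoint decomposition $F = \bigsqcup_n \partial X_n$. This yields an index $n$ and an open interval $U \subset I$ with $\emptyset \neq U \cap F \subset \partial X_n$. Picking $y \in U \cap F$, the fact that $y \in \partial X_n$ means that $U$ must meet $I \setminus X_n = \bigsqcup_{m \neq n} X_m$, so I choose $z \in U \cap X_m$ for some $m \neq n$. On the closed segment $[y, z] \subset U$, the set $[y, z] \cap X_m$ is nonempty (contains $z$), closed, and proper ($y \notin X_m$); taking $p \equiv \inf([y,z] \cap X_m)$ gives $p \in X_m$ by closedness, while $[y, p) \subset I \setminus X_m$, so $p \in \partial X_m$. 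But $p \in U$, whence $p \in U \cap F \subset \partial X_n$, contradicting $\partial X_n \cap \partial X_m \subset X_n \cap X_m = \emptyset$.

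The principal technical point will be verifying closedness of $F$ cleanly, as it requires combining the interior/boundary behavior of each individual $X_n$ with the disjointness of the family; once $F$ is set up properly, the combination of Baire category with the elementary connectedness argument on $[y,z]$ goes through without difficulty.
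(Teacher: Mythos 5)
Your proof is correct and follows essentially the same route as the paper's: both apply the Baire category theorem to the closed set of boundaries $\bigcup_n \partial X_n$ and then derive a contradiction from the connectedness of an interval. The only cosmetic difference is that the paper first shows this boundary set has empty interior in $I$ (via a second application of Baire to $I$ itself) before concluding, whereas you reach the contradiction directly by producing a boundary point of some other $X_m$ inside $U$ through an infimum argument on the segment $[y,z]$.
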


Theorem \ref{thm:sierpinski} is only needed to obtain uncountably many distinct minimisers, as non-uniqueness follows already from more elementary means. We also note that Theorem \ref{thm:sierpinski} holds more generally for a compact, connected Hausdorff space, see e.g.\ \cite[Theorem 6.1.27]{Engelking1989}. In the case of an interval there is a simple proof, which we give here for the sake of completeness:

\begin{proof}
Take $Y\equiv\bigcup_n \p X_n = I\exc \bigcup_n \tp{int}(X_n)$, which is closed, thus a complete metric space. 

We observe that the set $Y$ has empty interior in $I$, i.e.\ any open interval $L$ contains an open set $U$ disjoint from $Y$. 
Indeed, from the Baire Category Theorem we see that there is an open set $U\subseteq L$ and some $X_m$ which is dense in $U$. Since $X_m$ is closed, we must have $U\subseteq \tp{int}\,X_m$ and thus $U$ is disjoint from $Y$. 

By the Baire Category Theorem there is also some open subinterval $J$ of $I$ and some $n\in \N$ such that $\p X_n$ is dense in $Y\cap J$. Since $\p X_n$ is closed we have $\p X_n\cap J = Y\cap J$.  Thus $(Y\exc \p X_n)\cap J=\emptyset$.

Suppose now that $X_n\neq I$. It follows that $J$ intersects $Y\exc\p X_n$. Indeed, since $Y$ has empty interior in $I$, $J$ intersects $I\exc X_n$ and so it intersects $\tp{int}(X_k)$ for some $k$. Actually, $J$ must intersect $\p X_k$: otherwise, $\tp{int}(X_k)\cap J$ is non-empty, open and closed in $J$, thus $\tp{int}\, X_k=J$, since $J$ is connected; clearly this is impossible, since $X_k$ is disjoint from $X_n$. So we proved that $J$ intersects $Y\exc\p X_n$, contradicting the previous paragraph.
\end{proof}

We are now ready to begin the proof of Theorem \ref{thm:nonuniqueness}, whose core idea is contained in the following lemma.

\begin{lemma}\label{lemma:formofminimisers}
Let $u$ be a $2p$-energy minimiser for a radially symmetric function $f \in \mathscr{H}^p(\R^2)$.
For $\a_0\in [0,2\pi]$, consider the set
\begin{equation}
\label{eq:Xalpha}
X_{\alpha_0}\equiv \left\{\alpha \in [0,2\pi]:u_\a = u_{\a_0} \tp{ modulo rotations}\right\},
\qquad \tp{where } u_\a(z)\equiv u(e^{i \alpha} z).
\end{equation} 
Assume that $f\in C^0(B_R)$ has a sign. If $X_{\alpha_0}=[0,2\pi]$ then there is $k\in \Z\setminus\{0\}$ such that 
$$u(z)=\phi_k(z) \quad \tp{ in } B_R, \tp{ modulo rotations.}$$
\end{lemma}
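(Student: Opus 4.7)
After relabelling we may assume $\a_0=0$, so the hypothesis reads: for every $\a\in[0,2\pi]$ there is $\beta(\a)\in\R/2\pi\Z$ with $u(e^{i\a}z)=e^{i\beta(\a)}u(z)$ on $B_R$. My strategy has three steps: (i) derive $\beta(\a)=k\a$ for some $k\in\Z\setminus\{0\}$; (ii) read off the polar representation $u(re^{i\theta})=\psi(r)e^{i(k\theta+\gamma(r))}$ with $\psi$ forced by the Jacobian; (iii) use energy minimality to force $\gamma$ to be constant, so that $u=e^{i\gamma_0}\phi_k$ on $B_R$.

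For (i), since $f$ has a sign and is continuous on $B_R$, the map $u$ has integrable distortion and is therefore continuous and discrete by Iwaniec--\v Sver\'ak. Fix $z_0\in B_R$ with $u(z_0)\neq 0$; the hypothesis implies $u(e^{i\a}z_0)$ is nowhere zero, so $\a\mapsto u(e^{i\a}z_0)/u(z_0)$ is a continuous map $\R\to\mb S^1$ and admits a continuous lift $\beta\colon\R\to\R$ with $\beta(0)=0$. The identity $e^{i\beta(\a_1+\a_2)}=e^{i\beta(\a_1)}e^{i\beta(\a_2)}$ places $(\a_1,\a_2)\mapsto[\beta(\a_1+\a_2)-\beta(\a_1)-\beta(\a_2)]/(2\pi)$ in $\Z$; continuity and connectedness together with the vanishing at $(0,0)$ force it to be identically zero, so $\beta$ is a continuous homomorphism. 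Cauchy's equation yields $\beta(\a)=k\a$ for some $k\in\R$, and $2\pi$-periodicity of $e^{i\beta}$ forces $k\in\Z$. The case $k=0$ is excluded in the nontrivial setting $f\not\equiv 0$: it would make $u$ radially symmetric with 1-dimensional image, forcing $\J u=0$.

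For (ii), setting $z=r>0$ in $u(e^{i\a}z)=e^{ik\a}u(z)$ gives $u(re^{i\theta})=e^{ik\theta}u(r)$. If $u(r_*)=0$ for some $r_*>0$ then $u$ would vanish on the entire circle $\mb S_{r_*}$, contradicting discreteness, so we may write $u(r)=\psi(r)e^{i\gamma(r)}$ with $\psi>0$ and $\gamma$ continuous. Substituting into \eqref{eq:jacobianpolarcoords} and \eqref{eq:dirichletenergypolarcoords},
\[
\J u=\frac{k\,\psi\psi'}{r}=f,\qquad |\D u|^2=(\psi')^2+\psi^2(\gamma')^2+\frac{\psi^2 k^2}{r^2};
\]
thus the Jacobian constraint determines $\psi$, while the energy density carries a strictly positive penalty $\psi^2(\gamma')^2$ whenever $\gamma$ varies. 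For step (iii), introduce the competitor
\[
\tilde u(z)\equiv\begin{cases}\psi(r)e^{i(k\theta+\gamma(R))}& \tp{if } z\in B_R,\\ u(z)& \tp{if } z\notin B_R,\end{cases}
\]
which matches $u$ on $\p B_R$ by continuity, satisfies $\J\tilde u=f$ globally (the Jacobian does not involve $\gamma'$), and has $|\D\tilde u|^2=(\psi')^2+\psi^2 k^2/r^2\leq|\D u|^2$ on $B_R$. Hence $\tilde u\in\dot W^{1,2p}$ is admissible, and the minimality of $u$ together with strict monotonicity of $x\mapsto x^p$ on $\R_+$ forces $\gamma'\equiv 0$ a.e.\ on $(0,R)$. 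Therefore $\gamma\equiv\gamma(R)$ and $u=e^{i\gamma(R)}\phi_k$ on $B_R$, as claimed.

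\textbf{Main obstacle.} The delicate step is (i): one must construct the lift $\beta$ consistently across $[0,2\pi]$, ensure that the rotation angle $\beta(\a)$ is well-defined independently of the auxiliary base point (this uses the nonconstancy of $u$, guaranteed by $f\not\equiv 0$), and promote the mod-$2\pi$ cocycle condition to a genuine functional equation before invoking Cauchy. Once (i) is in hand, (ii) is a one-line lifting observation and (iii) reduces to a short competitor argument exploiting the fact that $\gamma'$ drops out of the Jacobian.
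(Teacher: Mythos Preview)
Your argument is correct, but it takes a genuinely different route to the key step of identifying the integer $k$. The paper uses only the weaker consequence of the hypothesis, namely that $|u(e^{i\alpha}z)|=|u(z)|$ for all $\alpha$, so circles centred at the origin are mapped to such circles; it then invokes the polar lifting Proposition~\ref{prop:lifting} to write $u=\psi(r,\theta)e^{i\gamma(r,\theta)}$ with $\psi$ independent of $\theta$, and reads off $\gamma(r,\theta)=k\theta+\beta(r)$ directly from the Jacobian equation $\partial_r(\psi^2)\,\partial_\theta\gamma=2rf(r)$ (both sides must be $\theta$-independent, and the compatibility condition \eqref{eq:compatibilitypolarcoords} forces $k\in\Z$). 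You instead exploit the full covariance $u(e^{i\alpha}z)=e^{i\beta(\alpha)}u(z)$ and determine $k$ by showing $\alpha\mapsto e^{i\beta(\alpha)}$ is a continuous homomorphism $\R\to\mb S^1$; this is more hands-on and essentially bypasses Proposition~\ref{prop:lifting} (you only need the trivial one-dimensional lift of $u(r)$), at the cost of the functional-equation digression. Both proofs finish with the same competitor argument to eliminate the radial phase; the paper leaves this implicit (``\eqref{eq:dirichletenergypolarcoords} readily implies that $\beta$ is constant''), whereas you spell out the competitor explicitly. One small point: to make your gluing at $\partial B_R$ airtight you should run the competitor argument on $B_{R'}$ for $R'<R$, where continuity up to the boundary is available, and then let $R'\nearrow R$. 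Your approach has the merit of being self-contained; the paper's is shorter once Proposition~\ref{prop:lifting} is in hand, and that proposition is reused in Lemma~\ref{lemma:formOfMinimizersAnnulus}.
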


\begin{proof}
If $X_{\a_0}=[0,2\pi]$ then, for any $\a\in [0,2\pi]$ and $z\in B_R$, we have
$|u(e^{i \a} z)|= |u(z)|$; that is, circles in $B_R$, centred at zero, are mapped to circles centred at zero.

For each $r\in (0,R)$, we have
$0\not \in u(\mb S_r)$. Indeed, for each ball $B\Subset B_R$, there is $c=c(B)>0$ such that $f\geq c$ in $B$ (or $f\leq -c$, but by reversing orientations we can always consider the first case without loss of generality). Thus, in $B_r$, $u$ is a map of integrable distortion and so it is both continuous and open \cite{Iwaniec1993a}.  Therefore $\partial(u(B_r))\subseteq u(\p B_r)=u(\mb S_r)$ and we see that $u(\mb S_r)\neq \{0\}$. Since $u(\mb S_r)$ is a circle, we conclude that $0\not\in u(\mb S_r)$.

By Proposition \ref{prop:lifting} we may write 
\begin{equation}\label{eq:polarrep}
u(r,\theta)=\psi(r,\theta)e^{i\gamma(r,\theta)}
\end{equation}
 where $\psi\in W^{1,2p}([0,R]\times [0,2\pi])$ and $\gamma\in W^{1,2p}([\e,R]\times [0,2\pi])$ satisfy \eqref{eq:compatibilitypolarcoords} and $\e>0$ is arbitrary.
For $r<R$, $u(\mb S_r)=\mb S_{r'}$, that is, $\psi(r,\theta)$ is independent of $\theta$. Thus, by \eqref{eq:jacobianpolarcoords}, $\J u =f$ reduces to
\begin{equation}
\p_r(\psi^2)\p_\theta \gamma= 2rf(r),
\label{eq:reducedjacobian}
\end{equation}
which is valid for almost every $(r,\theta)\in (0,R]\times[0,2\pi]$.
Since both $\psi$ and the right-hand side are independent of $\theta$ we must have
 $\gamma(r,\theta)=k\theta + \beta(r)$ and additionally there is the
compatibility constraint \eqref{eq:compatibilitypolarcoords} which yields $k\in \Z$. We may assume that $k\neq 0$: otherwise  \eqref{eq:reducedjacobian} shows that $f=0$ a.e., which is impossible. Since $u$ is a $2p$-energy minimiser, \eqref{eq:dirichletenergypolarcoords} readily implies that $\beta$ is constant. We integrate both sides of \eqref{eq:reducedjacobian}, using $\psi(0)=0$, to find
$$\psi(r)^2 =\frac 1 k \int_0^r 2s f(s) \d s \qquad \tp{ for } r<R.$$ 
Thus, modulo rotations, $u=\phi_k$ in $B_R$.
\end{proof}

In fact, the same argument applied in an annulus $\mb A(R_0,R)$ gives the following  variant:
\begin{lemma}\label{lemma:formOfMinimizersAnnulus}
Consider the setup of Lemma \ref{lemma:X0}, but replace $B_R$ by $\mb A(R_0,R)$.
Then there is $k\in \Z\setminus\{0\}$ and $c\in \R$ such that, in $\mb A(R_0,R)$,
$$u(z)= \psi(r)e^{2\pi i k \theta} \tp{ modulo rotations}, \qquad 
\tp{where }\psi(r)^2 = \frac 1 k\int_{R_0}^r 2 s f(s)\d s+c.$$
\end{lemma}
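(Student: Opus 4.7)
The plan is to follow the proof of Lemma \ref{lemma:formofminimisers} almost verbatim, with two modifications to accommodate the fact that the domain is an annulus rather than a ball: the integrable distortion/openness argument has to be localised to subannuli, and the integration constant produced in the last step is no longer forced to vanish by a boundary condition at the origin.

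First, from $X_{\alpha_0} = [0,2\pi]$ I conclude, exactly as in the ball case, that $|u(e^{i\alpha}z)| = |u(z)|$ for every $\alpha$ and every $z \in \mb A(R_0,R)$, so each circle $\mb S_r \subset \mb A(R_0,R)$ is sent into a circle centred at $0$. Next I show that $0 \notin u(\mb S_r)$ for all $r \in (R_0,R)$. The assumption that $f \in C^0$ has a sign ensures that $|f|$ is bounded below by some positive constant on any compactly contained subannulus $\mb A(r-\delta,r+\delta) \Subset \mb A(R_0,R)$; hence $u$ restricted to such a subannulus is a Sobolev map of integrable distortion and is therefore open and continuous by Iwaniec--\v Sver\'ak \cite{Iwaniec1993a}. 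Repeating the topological argument from Lemma \ref{lemma:formofminimisers} (the image of an open set is open, so its topological boundary lies in $u(\mb S_r)$), we deduce that $u(\mb S_r)$ is a genuine circle around $0$.

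Having established this, Proposition \ref{prop:lifting} applies on $\mb A(R_0,R)$ (we may take $\e = 0$ since $R_0 > 0$) and produces continuous functions $\psi$ and $\gamma$ satisfying \eqref{eq:compatibilitypolarcoords} and $u(r,\theta) = \psi(r,\theta)e^{i\gamma(r,\theta)}$. The previous step forces $\psi$ to depend only on $r$, so \eqref{eq:jacobianpolarcoords} reduces $\J u = f$ to
\[
\partial_r(\psi^2)\,\partial_\theta \gamma = 2 r f(r).
\]
Since the right-hand side is independent of $\theta$, combining with \eqref{eq:compatibilitypolarcoords} yields $\gamma(r,\theta) = k\theta + \beta(r)$ for some $k \in \Z$, necessarily nonzero since $f \not\equiv 0$. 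Plugging this into \eqref{eq:dirichletenergypolarcoords} shows that the Dirichlet energy is strictly decreased by replacing $\beta$ with a constant, so the minimality of $u$ forces $\beta$ to be constant.

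The only genuinely new point is the last integration. In the ball case one uses $\psi(0) = 0$ to fix the constant of integration; here one simply integrates $\partial_r(\psi^2) = 2rf(r)/k$ from $R_0$ to $r$, which yields
\[
\psi(r)^2 = \frac{1}{k}\int_{R_0}^r 2sf(s)\,\d s + c, \qquad c := \psi(R_0)^2 \geq 0,
\]
and absorbing the constant phase $\beta$ into the ``modulo rotations'' clause completes the proof. I do not anticipate any substantive obstacle: the only care point is justifying the openness step on subannuli, and the appearance of the free constant $c$ is in fact the entire content of the generalisation over Lemma \ref{lemma:formofminimisers}.
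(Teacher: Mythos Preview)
Your proposal is correct and matches the paper's approach exactly: the paper does not give a separate proof but simply states that ``the same argument applied in an annulus $\mb A(R_0,R)$ gives the following variant,'' and you have spelled out precisely that adaptation, correctly identifying the two points of difference (localising the integrable-distortion/openness step to compact subannuli, and the undetermined constant $c=\psi(R_0)^2$ arising in the final integration). The only cosmetic imprecision is in your phrasing of the topological step---the boundary of the image of a subannulus lies in the image of \emph{both} boundary circles, not just $u(\mb S_r)$---but the conclusion $0\notin u(\mb S_r)$ follows immediately from the discreteness part of Iwaniec--\v Sver\'ak (a map of integrable distortion cannot collapse an entire circle to a point), which is implicit in your citation.
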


We now combine the previous two lemmas.

\begin{lemma}\label{lemma:X0}
There is a radially symmetric $f\in \mathscr H^p(\R^2)$, admitting a $2p$-energy minimiser $u$, for which we have $X_0\neq [0,2\pi]$, where $X_0$ is as in \eqref{eq:Xalpha}.
\end{lemma}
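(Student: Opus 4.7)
The plan is to exhibit a specific radially symmetric $f \in \mathscr H^p(\R^2)$ admitting a $2p$-energy minimiser $u$ that is not radially symmetric in the strong sense $X_0 = [0,2\pi]$. The idea is to choose $f$ so that every generalised radial stretching $\phi_k$ formally solving $\J\phi_k = f$ carries substantially more $\dot W^{1,2p}$-energy than some explicit non-radial competitor, forcing the minimiser to break the symmetry.

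Concretely, fix a small parameter $\varepsilon > 0$ and take (a mild smoothing of) the data
\[
f_\varepsilon \equiv \varepsilon \cdot 1_{B_{R_0}} + 1_{\mb A(R_0, R_1)} - \delta \cdot 1_{\mb A(R_1, R_2)},
\]
with $R_0, R_1, R_2, \delta > 0$ chosen so that $f_\varepsilon$ has compact support and mean zero. Together with boundedness, this ensures $f_\varepsilon \in \mathscr H^p(\R^2)$ for every $p \in [1,\infty)$. The key structural features are that $f_\varepsilon$ is continuous and strictly positive on $B_{R_0}$, and has a jump at $r = R_0$. An adaptation of the construction in Theorem \ref{thm:counterex} (reversing orientation on the outer annulus where $f_\varepsilon<0$ and patching with an identity boundary layer) yields a Lipschitz solution $v_\varepsilon$ of $\J v_\varepsilon = f_\varepsilon$ with $\Vert Dv_\varepsilon \Vert_\infty \leq C$ independent of $\varepsilon$. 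In particular $\mc E_{2p}(f_\varepsilon) \leq C^{2p}|\supp f_\varepsilon|$ uniformly in $\varepsilon$, and existence of a $2p$-energy minimiser $u$ follows from the Direct Method (routine for $p>1$ by the Coifman--Lions--Meyer--Semmes weak continuity of the Jacobian; requiring some extra care for $p=1$).

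Assuming for contradiction that $X_0 = [0,2\pi]$ for $u$, Lemma \ref{lemma:formofminimisers} applied on $B_{R_0}$ yields $u = \phi_k$ on $B_{R_0}$ for some $k \in \Z \setminus \{0\}$, modulo a rotation. Applying Lemma \ref{lemma:formOfMinimizersAnnulus} to each outer annulus where $f_\varepsilon$ is continuous and signed gives a representation $u = Q\,\psi(r)\,e^{ik'\theta}$ there. Since $\psi$ does not vanish at $r = R_0$, matching Sobolev traces across $\mb S_{R_0}$ via the polar lifting of Proposition \ref{prop:lifting} forces the angular degrees to agree, so $u = \phi_k$ globally modulo rotations, with $\psi^2(r) = \frac{1}{k}\int_0^r 2sf_\varepsilon(s)\,ds$. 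A direct computation patterned on \eqref{eq:radialblowsup} and Example \ref{ex:annulus} shows that the jump of $f_\varepsilon$ at $r = R_0$ forces $\Vert D\phi_k\Vert_{L^{2p}}^{2p}$ to diverge as $\varepsilon \to 0$, even after optimising over the integer $k$: balancing the radial and angular contributions $\sim 1/(k^p\varepsilon^{p-1})$ and $\sim k^p$ gives lower bounds of order $\varepsilon^{-(p-1)/2}$ for $p>1$ and $\sqrt{\log(1/\varepsilon)}$ for $p=1$. Choosing $\varepsilon$ small enough that these exceed the $\varepsilon$-uniform energy of the competitor $v_\varepsilon$ contradicts the minimality of $u$, so $X_0 \neq [0,2\pi]$.

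The main obstacle is the trace-matching step at $r = R_0$: one must extract from the Sobolev regularity of $u$ that the topological degree obtained from $B_{R_0}$ agrees with that on the adjacent annulus, which relies on the polar-coordinate lifting of Section \ref{sec:radialdata} applied on a thin tubular neighbourhood of $\mb S_{R_0}$. Secondary technical points include carrying out the Fonseca--Parry construction to produce $v_\varepsilon$ when $f$ takes negative values on the outer annulus (a stitched variant of the argument in Theorem \ref{thm:counterex}), and justifying existence of a minimiser when $p=1$, where weak continuity of the Jacobian fails on $W^{1,2}$.
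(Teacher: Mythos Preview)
Your overall strategy matches the paper's: pick $f$ so that the trace-matching arguments of Lemmas~\ref{lemma:formofminimisers} and~\ref{lemma:formOfMinimizersAnnulus} force $u=\phi_k$ on a region where $\phi_k$ carries too much energy to be a minimiser. The execution, however, has a genuine error and is also more laborious than necessary.

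The error is in the $k$-balancing step. With the normalisation of Definition~\ref{def:radialstretching} and $\rho(r)^2=\tfrac{1}{k}\int_0^r 2sf(s)\,\mathrm{d}s\equiv \tfrac{1}{k}\Psi(r)$, one computes
\[
\bigg|\frac{1}{r}\,\partial_\theta\phi_k\bigg|^2=\frac{|k|\,\rho(r)^2}{r^2}=\frac{\Psi(r)}{r^2},
\qquad
|\partial_r\phi_k|^2=\frac{r^2 f(r)^2}{k^2\,\Psi(r)},
\]
so the angular contribution to $|\mathrm{D}\phi_k|^{2p}$ is \emph{independent of} $k$, not $\sim k^p$. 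Hence your balancing argument collapses: sending $k\to\infty$ kills the radial term while the angular term stays bounded, and no $\varepsilon$-divergent lower bound emerges from that competition. What actually rescues the contradiction is something you did not exploit: the mean-zero condition forces $\Psi(R_2)=0$ while $f(R_2^-)\neq 0$, so $|\partial_r\phi_k|^2\sim 1/(k^2(R_2-r))$ near $r=R_2$ and $\phi_k\notin W^{1,2}(B_{R_2})$ for \emph{every} $k$. This yields an absolute contradiction, independent of $\varepsilon$ and of any competitor estimate --- and it is precisely the mechanism the paper uses (there via the condition $\int_{B_2}f=0$ with $f(2)>0$).

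The paper's execution is accordingly much cleaner: it takes $f\in C^1$ satisfying $\int_{B_2}f=\int_{\R^2}f=0$ with $f<0$ on $(0,1)$ and $f>0$ on $(1,2)$, obtains a compactly supported $C^1$ competitor directly from a theorem of Kneuss (so the Direct Method gives a minimiser), and then the trace-matching forces $u=\phi_k$ in $B_2$, which has $\int_0^2|\dot\rho|^2\,\mathrm{d}r=+\infty$. No $\varepsilon$, no optimisation over $k$, and no need to adapt Theorem~\ref{thm:counterex} to sign-changing data on $\R^2$ --- a step you correctly flag as non-trivial, since that construction is tailored to identity boundary data on a ball with non-negative $f$.
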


\begin{proof}
We take a function $f\colon \R^2\to \R$ satisfying the following conditions:
\begin{equation}
\label{eq:defdata}
\begin{split}
&f\in C^1(\R^2) \tp{ is radially symmetric},\\
&\int_{B_2} f\d x = \int_{\R^2} f \d x = 0\\
&f(r) <0 \tp{ if } 0<r<1, \quad  f(r)>0 \tp{ if } 1<r<2, \quad f(r)=(4-r)^+ \tp{ if } 3<r.
\end{split}
\end{equation}
By \cite[Theorem 4]{Kneuss2012}, there is $v\in C^1(\overline{B_4},\R^2)$ such that $\J v = f$ and $v=0$ on $\mb S_4$; in particular, by extending $v$ by zero outside $B_4$, we have $v\in W^{1,2p}(\R^2,\R^2)$. Since the $2p$-Dirichlet energy is convex, the Direct Method, combined with the sequential weak continuity of the Jacobian, shows that $f$ has at least one $2p$-energy minimiser and we call it $u$, using it to define the sets in \eqref{eq:Xalpha}. 

Suppose, for the sake of contradiction, that $X_0=[0,2\pi]$. Using Lemmas \ref{lemma:formofminimisers} and \ref{lemma:formOfMinimizersAnnulus}, we deduce that there are angles $\a,\a'\in [0,2\pi)$, numbers $k,k'\in \Z$ and $c\in \R$ such that
$$u=e^{i \a} \phi_k \tp{ in } B_1, \qquad u=e^{i \a'} \left(\psi(r) e^{2\pi i k' \theta}\right) \tp{ in } \mb A(1,2),$$
where, for $r\in (1,2)$,
$$\psi(r)^2 =\frac{1}{k'} \int_1^r 2 s f(s) \d s + c.$$
In the notation of Definition \ref{def:radialstretching}, we must have
\begin{align*}
e^{i\a+i k \theta} \frac{\rho(1)}{\sqrt{|k|}}\equiv\tp{Tr}_{\mb  S_1} u_{|B_1}=\tp{Tr}_{\mb  S_1} u_{|\mb A(1,2)} \equiv e^{i\alpha'+i k'\theta} c
\end{align*}
in $L^{2p}(\mb S_1)$. It is easy to conclude that $\alpha=\alpha'$, $k=k'$ and $c=\rho(1)/\sqrt{|k|}$, and so, modulo rotations, actually $u=\phi_k$ in $B_2$.
It is now easy to verify directly that, for $f$ as in \eqref{eq:defdata}, we have
$$\int_0^2 | \dot\rho(r)|^2 \d r = +\infty,$$
and so by Lemma \ref{lemma:ball} $u\not\in W^{1,2}(B_2,\R^2)$, which is a contradiction. Alternatively, one can infer that $u\not\in W^{1,2}(B_2,\R^2)$ from \cite[Theorem 3.4]{Lindberg2015}.
\end{proof}

\begin{proof}[Proof of Theorem \ref{thm:nonuniqueness}]
Let $f$ and $u$ be as in Lemma \ref{lemma:X0}. 
For each $\a\in [0,2\pi]$, it is easy to check that the set $X_\a$ is closed. We may write, for some index set $A$,
$$[0,2\pi]= \bigcup_{\a \in A} X_\alpha, \qquad \tp{where the union is disjoint.}$$
For distinct $\a,\a'\in A$, $X_\a$ and $X_{\a'}$ correspond to distinct equivalence classes of $2p$-energy minimisers, and so by Lemma \ref{lemma:X0} we must have $\# A>1$. But now Theorem \ref{thm:sierpinski} shows that $A$ must be uncountable.
\end{proof}

We also note that the proof of Lemma \ref{lemma:formofminimisers} yields the following corollary:

\begin{cor}\label{cor:formofminimisers}
Let $f\in \mathscr H^p(\R^2)$ be radially symmetric and suppose $u$ is its unique $2p$-energy minimiser, modulo rotations. If $u$ is continuous then $u=\phi_k$ for some $k \in \Z \setminus \{0\}$.
\end{cor}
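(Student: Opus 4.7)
The plan is to reduce Corollary \ref{cor:formofminimisers} to the machinery of Lemma \ref{lemma:formofminimisers} by leveraging the uniqueness hypothesis. For each $\alpha \in [0,2\pi]$, set $u_\alpha(z) := u(e^{i\alpha}z)$: the radial symmetry of $f$ gives $\J u_\alpha = f$, and a change of variables shows $u_\alpha$ has the same $2p$-energy as $u$, so $u_\alpha$ is itself a $2p$-energy minimiser. By uniqueness modulo rotations there exists $Q_\alpha \in \tp{SO}(2)$ with $u_\alpha = Q_\alpha u$; in particular $|u|$ is radial, placing us in the setting $X_0 = [0,2\pi]$ of Lemma \ref{lemma:formofminimisers}. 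Setting $\psi(r) := |u(re^{i\theta})|$, the continuity hypothesis makes $\psi \in C([0,\infty))$.

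The identity $(u_\alpha)_\beta = u_{\alpha+\beta}$, together with the abelian-ness of $\tp{SO}(2)$ and uniqueness of $Q_\alpha$ at non-trivial points of $u$, shows that $\alpha \mapsto Q_\alpha$ is a group homomorphism; continuity as $\alpha \to 0$ follows from $u_\alpha \to u$. Every continuous homomorphism $\S^1 \to \tp{SO}(2)$ has the form $\alpha \mapsto R_{k\alpha}$ for some integer $k$, so $u(e^{i\alpha}z) = e^{ik\alpha} u(z)$ in complex notation, and in particular $u(0)=0$. On each connected component of $\{r>0:\psi(r) > 0\}$ we then apply Proposition \ref{prop:lifting} to write $u(re^{i\theta}) = \psi(r) e^{i\gamma(r,\theta)}$, and the above identity forces $\gamma(r,\theta) \equiv k\theta + \beta(r) \pmod{2\pi}$. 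Substituting into the polar formula \eqref{eq:jacobianpolarcoords} gives $\psi(r)^2 = \frac{1}{k}\int_0^r 2sf(s)\d s$, and \eqref{eq:dirichletenergypolarcoords} combined with the minimality of $u$ forces $\beta$ to be locally constant. Hence $u$ coincides with $\phi_k$, modulo a target rotation, on each connected component of $\{\psi > 0\}$.

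The hard part is ensuring that these componentwise identifications glue into a single $\phi_k$ on all of $\R^2$. Suppose $\psi(r_0) = 0$ for some $r_0 > 0$, producing distinct components $C_1, C_2 \subset \{\psi > 0\}$ on either side of $r_0$. Because $u \equiv 0$ on $\mb S_{r_0}$, for each $\omega \in \R$ the map $\tilde u$ defined by $\tilde u = u$ on the preimage of $C_1$ and $\tilde u = e^{i\omega} u$ elsewhere is continuous, satisfies $\J \tilde u = f$, and has the same $2p$-energy as $u$ (both the Jacobian and $|\D u|$ are invariant under post-composition with the rotation $e^{i\omega}$). For $\omega \not\equiv 0 \pmod{2\pi}$ no single target rotation relates $\tilde u$ to $u$, contradicting uniqueness modulo rotations. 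Hence $\{\psi > 0\}$ has a single component in $(0,\infty)$, and the previous step yields the required $k \in \Z \setminus \{0\}$ with $u = \phi_k$ globally.
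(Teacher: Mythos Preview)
Your proof is correct and shares the two decisive moves with the paper: deducing from uniqueness that $|u|$ is radial (i.e.\ $X_0=[0,2\pi]$), and showing that $\{r>0:|u(r)|>0\}$ is connected by rotating $u$ on one side of a zero circle to manufacture a non-equivalent minimiser. The paper, however, orders things differently and more economically: it proves connectedness \emph{first}, and then invokes Lemma~\ref{lemma:formOfMinimizersAnnulus} a single time on the resulting annulus $(R_1,R_2)$ to extract $k$. You instead insert a group-theoretic step, observing that $\alpha\mapsto Q_\alpha$ is a continuous homomorphism $\S^1\to\tp{SO}(2)$ and hence of the form $e^{ik\alpha}$, which fixes a \emph{global} integer $k$ and yields $u(re^{i\theta})=e^{ik\theta}u(r)$ before any component analysis; this is elegant and makes the appeal to Proposition~\ref{prop:lifting} nearly superfluous, but it is more than what is needed, since once connectedness is established there is only one component and hence only one $k$ to find. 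One minor omission: you do not dispose of the possibility $k=0$ from the homomorphism; this forces $\partial_\theta u=0$ and hence $f=\J u=0$, after which the conclusion is trivial.
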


Clearly the continuity assumption is not restrictive if $p>1$.

\begin{proof}
As in the proof of Lemma \ref{lemma:formofminimisers} we conclude that $u$ maps circles centred at zero to circles and that $(r,\theta)\mapsto |u(r e^{i \theta})|$ is independent of $\theta$. Thus we write simply $|u(r)|$.

We show that the set $\{r \in (0,\infty) \colon |u(r)| > 0\}$ is connected. Suppose, by way of contradiction, that there are $r_1 < r_2 < r_3$ such that $|u(r_1)|, |u(r_3)| > 0$ but $|u(r_2)| = 0$. We get another $2p$-energy minimiser for $f$ by setting
\[v(z) = \begin{cases}
           u(z), & |z| \le r_2, \\
           e^{i \pi} u(z), & |z| > r_2,
         \end{cases}\]
contradicting the assumption that the $2p$-energy minimiser for $f$ is unique modulo rotations.

Thus we can write, for some $0\leq R_1 \leq R_2\leq \infty$,
$$\left\{r \in (0,\infty) \colon |u(r)| > 0\right\}=(R_1,R_2).$$
We can use Lemma \ref{lemma:formOfMinimizersAnnulus} to conclude that $u=\phi_k$ in $\mb A(R_1,R_2)$, modulo rotations. Moreover, clearly we must have $f(r)=0$ if $r\not \in (R_1,R_2)$. Thus $\phi_k(z)=0$ if $r\not\in (R_1,R_2)$ and so $u=\phi_k$ outside $\mb A(R_1,R_2)$ as well.
\end{proof}

\bigskip
{\small \noindent \textbf{Acknowledgments.}
We thank Tadeusz Iwaniec for suggesting the formula of the map in \eqref{eq:Iwaniecsmap}.
}

\let\oldthebibliography\thebibliography
\let\endoldthebibliography\endthebibliography
\renewenvironment{thebibliography}[1]{
  \begin{oldthebibliography}{#1}
    \setlength{\itemsep}{1.5pt}
    \setlength{\parskip}{1.5pt}
}
{
  \end{oldthebibliography}
}

{\small
\bibliographystyle{acm}
\bibliography{/Users/antonialopes/Dropbox/Oxford/Bibtex/library.bib}
}

\end{document}